\newtheorem{theorem}{Theorem}[section]
\newtheorem{corollary}[theorem]{Corollary}
\newtheorem{definition}[theorem]{Definition}
\newtheorem{lemma}[theorem]{Lemma}
\newtheorem{observation}[theorem]{Observation}
\newtheorem{claim}[theorem]{Claim}
\newif\ifcomments
\newcommand{\notcross}[1]{$#1$-non-crossing}
\title{Planar induced paths via a decomposition into non-crossing ordered graphs}
\author[*]{Julien Duron}
\author[$\dagger$]{Hugo Jacob} 
\affil[*]{Institute of Informatics, University of Warsaw, Poland}
\affil[$\dagger$]{LIRMM, Université de Montpellier, CNRS, France}
\date{}
\begin{document}

\maketitle

\makeatletter{\renewcommand*{\@makefnmark}{}
\footnotetext{*The work of Julien Duron on this project is supported by the project BOBR that
has received funding from the European Research Council (ERC) under
the European Union’s Horizon 2020 research and innovation programme
(grant agreement No 948057).}
\footnotetext{$\dagger$ The work of Hugo Jacob on this project is supported by ANR GODASse ANR-24-CE48-4377.}
\makeatother}

\begin{abstract}
In any graph, the maximum size of an induced path is bounded by the maximum size of a path. However, in the general case, one cannot find a converse bound, even up to an arbitrary function, as evidenced by the case of cliques. Galvin, Rival and Sands proved in 1982 that, when restricted to weakly sparse graphs, such a converse property actually holds.

In this paper, we consider the maximal function $f$ such that any planar graph (and in general, any graph of bounded genus) containing a path on $n$ vertices contains an induced path of size $f(n)$, and prove that $f(n) \in \Theta \left(\frac{\log n}{\log \log n}\right)$ by providing a lower bound matching the upper bound obtained by Esperet, Lemoine and Maffray, up to a constant factor. 
We obtain these tight bounds by analyzing graphs ordered along a Hamiltonian path that admit an edge partition into a bounded number of sets without crossing edges. In particular, we prove that when such an ordered graph can be partitioned into $2k$ sets of non-crossing edges, then it contains an induced path of size $\Omega_k\left(\left(\frac{\log n}{\log \log n}\right)^{1/k} \right)$ and provide almost matching upper bounds.
\end{abstract}

\paragraph{Acknowledgements.} We thank Daniel Gonçalves for his involvement in the discussions which lead to this paper, for sharing his knowledge, and for his guidance. We also thank the organisers of the first workshop of ANR GODASse where we could meet and establish the first results.

\begin{textblock}{20}(-1.75, 3.5)
\includegraphics[width=40px]{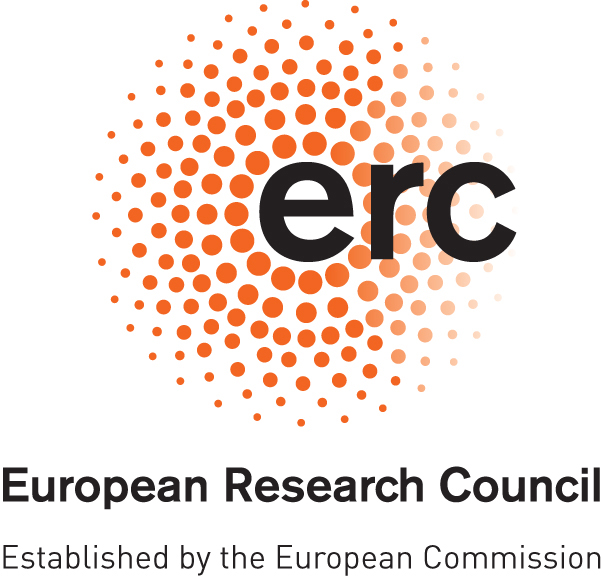}%
\end{textblock}
\begin{textblock}{20}(-1.75, 4.5)
\includegraphics[width=40px]{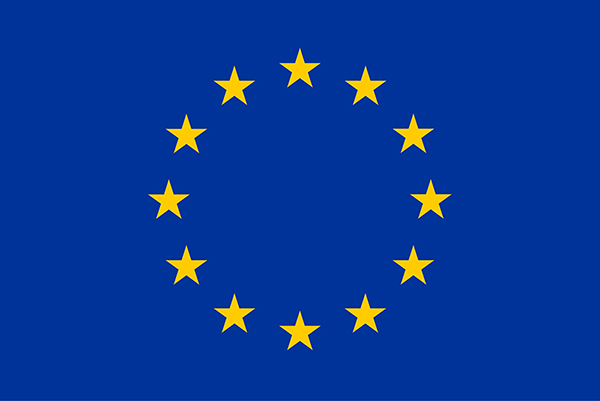}%
\end{textblock}

\section{Introduction}

This paper is dedicated to answering some questions related to the `guaranteed' size of longest induced paths in graphs with sufficiently long paths. The first result on this topic, obtained by Galvin, Rival and Sands \cite{GalvinRS82}, states that there is an unbounded function $f$ such that any $K_{t,t}$-subgraph-free graph containing a path of size $n$ contains an induced path of size $f(n)$. This was independently discovered later in \cite{AtminasLR12}, and the bounds on the growth function $f$ have been studied further in \cite{InducedPathSparse, DefrainR24, HunterMST24, couetoux2025quasi}. To the best of our knowledge, the current best lower bound $f = \Omega(\frac{\log \log n}{\log \log \log n})$ was obtained in \cite{HunterMST24} while an almost matching upper bound $f = O(\log \log n \cdot \log \log \log n)$ was proven recently in \cite{couetoux2025quasi}.

Finding the good dependence function between the size of paths and induced paths can be extended to any class of graphs $\mathcal{C}$: what is the largest function $f_\mathcal{C}$ such that any graph in $\mathcal{C}$ containing a path on $n$ vertices (as a subgraph) has to contain a path on $f_\mathcal{C}(n)$ vertices as an induced subgraph. We refer the reader to \cite{InducedPathSparse} for a survey of the know bounds.
The special case of the class $\mathcal{P}$ of planar graphs has been repeatedly studied in the literature \cite{ArochaV00, GLM16, EsperetLM17, InducedPathSparse}, with a best known lower bound of  $f_{\mathcal{P}} = \Omega(\sqrt{\log n})$ and a best upper bound of $f_{\mathcal{P}} = O(\log n / \log\log n)$ both obtained in \cite{EsperetLM17}. We prove that $f_{\mathcal{P}} = \Theta(\log n / \log\log n)$, improving the lower bound to match the upper bound up to a (relatively small) constant factor. Our proofs provide tight bounds in the general case of bounded genus graphs.

An important observation is that when $\mathcal{C}$ is a hereditary class, that is a class closed by taking induced subgraphs, the function $f_{\mathcal{C}}$ is equal to $f_{\mathcal{C}'}$ where $\mathcal{C}'$ denotes the class of graphs of $\mathcal{C}$ containing a Hamiltonian path.
In addition, as observed in \cite{InducedPathSparse}, ordered graphs naturally arise from graphs containing a Hamiltonian path, by simply ordering the vertices along this path. In this paper, we will follow this line of work, and exhibit in particular induced paths that are \emph{increasing} with respect to the order given to our graphs.

Considering ordered graphs allows to make precise arguments for graph classes excluding some ordered subgraphs. A typical example of an ordered subgraph one could exclude is the \emph{crossing}, that is four vertices $a \prec b \prec c \prec d$ with the two edges $ac$ and $bd$, see \Cref{fig:crossing} Note that an ordered graph excluding a crossing is in particular outerplanar.
In \cite{InducedPathSparse} it was proven that, for graphs avoiding a crossing along a Hamiltonian path,  a tight lower bound of $\frac{\log n}{2}$ on the size of increasing induced paths was obtained, recovering in particular the known lower bound on outerplanar graphs.

To prove a tight bound on planar graphs, we introduce \notcross{k} graphs, that are ordered graphs for which there exists a partition the set of edges into $k$ parts, none of which contains a crossing.
This definition comes from the fact that finding long induced paths in planar graphs can be reduced to finding long induced paths in \notcross{2} graphs.
One can note that such a partition provides a book embedding (see e.g. \cite{graphdrawingtextbook}), however since the vertex ordering is already fixed by the Hamiltonian path, graphs admitting such a partition are way more restricted than graphs of stack number $k$ (the analog of \notcross{k} when the ordering can be chosen). For example, consider the (increasing) path on vertex set $[2n]$, on which we add the edges between vertices $i$ and $i + n$ for $i \in [n]$. This (unordered) graph admits a book embedding on 2 pages, but if it is considered as an ordered graph with the natural order of $[2n]$, then it is not \notcross{(n - 1)}.

Given a \notcross{k} (ordered) graph $G$, one can easily prove that for any set $\{v_1, \dots, v_{k+1}\}$ of $k + 1$ vertices, there exists a large interval $I \subseteq V(G)$ of vertices and least one vertex $v_i$ such that $v_i$ is not adjacent to any vertex of $I$.
Using the hypothesis that the size of induced paths is sufficiently small, we strengthen this property to work for sets of at most $k$ vertices; see \Cref{lem:gap} and \Cref{lem:gap_k}.
From that fact, we describe a procedure to grow a set of induced paths (or trees depending on $k$) which ends only after a certain number of steps.
The intuition behind this procedure is the following. Consider a set of $k$ induced paths, but not mutually induced, each one of them having a marked endpoint denoted the \emph{root} of the path.
We furthermore assume the existence of some large interval $I$ of vertices that is adjacent to all the roots, and non adjacent to the rest of the paths.
From $I$, and using the fact that we have $k$ different roots, we find a sub-interval $I'$ that is not adjacent to at least one of the roots, say $v$.
We then add the neighbor $u \in I$ of $v$ that is the closest to $I'$ to the path containing $v$, and define $u$ as the new root of this path.

In \Cref{sec:planar}, we give a proof of a tight lower bound for the planar case which is based on a construction for graphs decomposing into two non-crossing ordered graphs. This serves both as a simpler self-contained proof for the class of planar graphs which is of particular interest, and as an introduction to the scheme later generalized for graphs which decompose into $k$ non-crossing ordered graphs. We also show how the case of graphs of bounded genus can be reduced to the planar case.

In \Cref{sec:k-outerplanar}, we give a proof of a lower bound for ordered graphs decomposing into \notcross{k} ordered graphs. As an intermediate step, we use a polynomial bound between the size and the depth of trees of bounded cutwidth.

In \Cref{sec:upper-bounds}, we give upper bounds almost matching the lower bound of \Cref{sec:k-outerplanar}.

\section{Preliminaries}
Given a graph $G$, we denote by $V(G)$ and $E(G)$ its vertex and edge set.
Given a vertex $v \in V(G)$, $N_G(v)$ denote the set of vertices adjacent to $v$ in $G$.
The \emph{size} of a path is its number of vertices.

An \emph{ordered graph} $(G,\prec)$ is a graph\footnote{We consider only simple graphs, i.e. undirected graphs with neither loops nor multi-edges.} $G$ with a given total order on its vertex set. Such a total order has also been called a (linear) \emph{layout} in the literature.
The \emph{cutwidth} of an ordered graph $(G,\prec)$ with ordering $v_1 \prec \dots \prec v_{|V(G)|}$ is the maximum over $i$ of the number of edges with one endpoint in $\{v_1,\dots,v_i\}$ and one endpoint in $\{v_{i+1},\dots ,v_{|V(G)|}\}$. The cutwidth of an unordered graph $G$ is the minimum over all orderings~$\prec$ of $V(G)$ of the cutwidth of $(G, \prec)$.

Given a graph $G$ with a Hamiltonian path $P := (v_1, \dots, v_{|V(G)|})$, the ordered graph $(G, \prec)$ is said to be \emph{ordered along the Hamiltonian path $P$}, or simply \emph{ordered along $P$}, when $\prec$ satisfies $v_i \prec v_j$ if an only if $i < j$.
An ordered graph $(G, \prec)$ with an increasing Hamiltonian path is in particular ordered along an Hamiltonian path.
A \emph{crossing} in an ordered graph $(G, \prec)$ is a pair of edges $(ab, cd)$ of $G$ such that $a \prec c \prec b \prec d$, see \Cref{fig:crossing}. When a set of edges $F$ of a given ordered graph $(G, \prec)$ does not contain a crossing, we say that $F$ is \emph{non-crossing}, we say that $(G, \prec)$ is non-crossing whenever $E(G)$ is non-crossing.
It is well-known that a graph $G$ is outerplanar if and only if it admits an ordering $\prec$ such that $(G,\prec)$ is non-crossing.
Given an ordered graph $(G, \prec)$, a set of edges $F \subseteq E(G)$ is said to be \emph{\notcross{k}} when $F$ can be partitioned into $k$ sets $F_1 \cup \dots \cup F_k$, each being non-crossing. When $E(G)$ is \notcross{k}, we say that $(G, \prec)$ is \notcross{k}.

\begin{figure}
    \centering
    \includegraphics{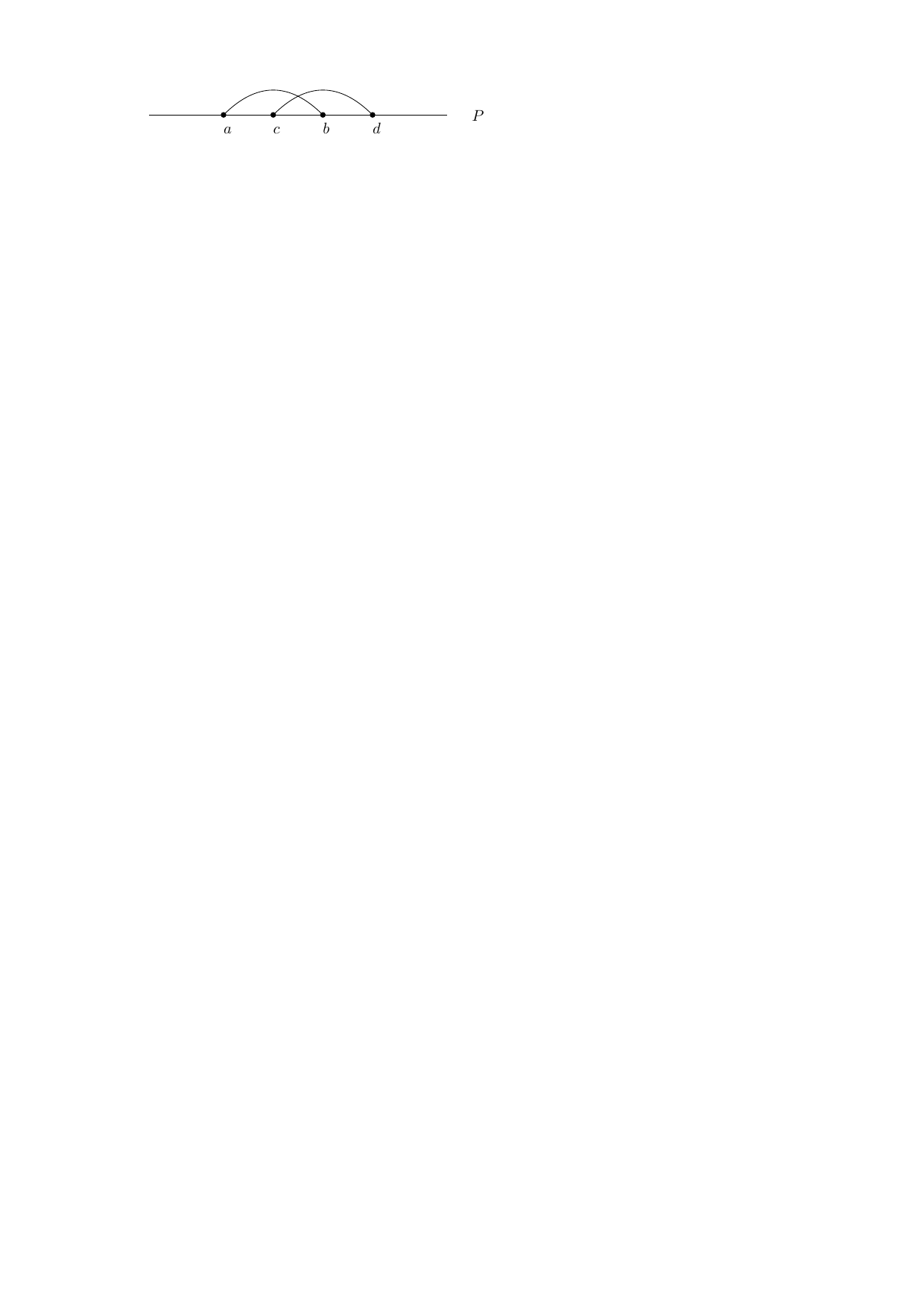}
    \caption{Illustration of a crossing: the edges $ab$ and $cd$ cross with respect to the order along $P$.}
    \label{fig:crossing}
\end{figure}

Given an ordered graph $(G, \prec)$ with vertices $v_1 \prec v_2 \prec \dots \prec v_{|V(G)|}$, an \emph{interval} of $V(G)$ is a subset of the form $\{v_i, \dots, v_j\}$ with $i \leqslant j$.
Such an interval may be denoted by $[v_i, v_j]$, or by $V[i, j]$ when subscripts would affect clarity. Its \emph{size} is its number of vertices.
Given a partition $\mathcal{P}$ of $V(G)$ into intervals $\mathcal{P} = \{I_1, \dots, I_{|\mathcal{P}|}\}$, the \emph{quotient ordered graph} $(G, \prec)/\mathcal{P}$ is the ordered graph with vertex set $\mathcal{P}$, with an edge between intervals $I_a$ and $I_b$ if and only if there exists an edge between a vertex of $I_a$ and a vertex of $I_b$, and with total order $\prec_{\mathcal{P}}$ (denoted also $\prec$ when $\mathcal{P}$ is clear from the context) where $I_a \prec_{\mathcal{P}} I_b$ if and only if there exists $x \in I_a$ and $y \in I_b$ with $x \prec y$.
Note that, if $(G, \prec)$ contains an increasing Hamiltonian path, then so does $(G, \prec)/\mathcal{P}$.

Consider an ordered graph $(G, \prec)$ with $v_1 \prec v_2 \prec \dots \prec v_{|V(G)|}$.
Given a vertex $u$ and an interval $I$ of $V(G)$, the \emph{gap of $u$ towards $I$} is the largest integer $k$ such that there exist integers $i, j$ with $i + k \leqslant j$ such that $v_i$ and $v_j$ are in  $I$, and $u$ is not adjacent to $[v_i, v_j]$ (note that this interval contains $k+1$ vertices). In other words, $k$ is the length of the largest interval of $I$ which is not hit by $N(u)$. Similarly, \emph{$u$ is adjacent to $I$}, or \emph{$u$ has an edge towards $I$}, if there exists $v \in I$ such that $uv$ is an edge.

Given an ordered graph $(G, \prec)$ and a partition $\mathcal{P}$ of $V(G)$ into intervals, one can exhibit an induced path of $G$ from an increasing induced path of $(G, \prec)/\mathcal{P}$. However, our proofs will obtain quotient graphs with almost no edges, which will allow us to exhibit an increasing induced path instead of just an induced path. This fact is formalized by the following simple lemma.  

\begin{lemma}\label{lem:lifting}
Let $(G, \prec)$ be ordered along a Hamiltonian path. For any partition $\{I_1, \dots, I_k\}$ of $V(G)$ into intervals such that $I_1 \prec I_2 \prec \dots \prec I_k$, and such that there is an edge between $I_i$ and $I_j$ only when $|i - j| \leqslant 1$, then $(G, \prec)$ contains an increasing induced path of size $k$.
\end{lemma}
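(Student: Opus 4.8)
The plan is to extract the increasing induced path greedily, picking one vertex from each interval $I_i$ in order, and exploiting the fact that interactions only happen between consecutive intervals to control non-adjacency. First I would record the key structural consequence of the hypotheses: the graph $(G,\prec)$ is ordered along a Hamiltonian path $P = (v_1, \dots, v_{|V(G)|})$, so for every $m$ the edge $v_m v_{m+1}$ is present; and whenever $v_a \in I_i$, $v_b \in I_j$ with $a < b$ are adjacent, we must have $j \in \{i-1, i, i+1\}$, hence in fact $j = i$ or $j = i+1$.

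The construction proceeds as follows. For each $i$, let $r_i$ be the \emph{last} vertex of $I_i$ in the order $\prec$ (the right endpoint of the interval). Since the intervals partition $V(G)$ and $I_i \prec I_{i+1}$, the vertex immediately following $r_i$ in the Hamiltonian path is the first vertex of $I_{i+1}$; in particular, set $p_i := r_i$ for $i < k$ and $p_k := r_k$, and note that $p_i$ and $p_{i+1}$ need not be Hamiltonian-path-consecutive. To fix this, instead take $p_i$ to be the \emph{right endpoint} of $I_i$ only for the last interval, and for $i<k$ take $p_i$ and $p_{i+1}$ to be a Hamiltonian-path edge crossing the boundary: concretely, let $b_i$ be the boundary index so that $I_i = V[\,\cdot\,, b_i]$ and $I_{i+1} = V[b_i+1, \cdot\,]$, and choose $p_i := v_{b_i}$, $p_{i+1}' := v_{b_i+1}$. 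The cleanest route is actually to pick $p_i := v_{b_{i-1}+1}$, the \emph{first} vertex of $I_i$ (with $b_0 := 0$): then $p_1 \prec p_2 \prec \dots \prec p_k$, and I claim $p_1 p_2 \dots p_k$ is an increasing induced path. It is a path because consecutive selected vertices lie in consecutive intervals and — here is the one real point to check — we can arrange them to be Hamiltonian-path-consecutive; and it is induced because a chord $p_i p_j$ with $j \geq i+2$ would be an edge between $I_i$ and $I_j$ with $|i-j| \geq 2$, contradicting the hypothesis.

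The main obstacle, and the only nontrivial step, is simultaneously guaranteeing (a) consecutive selected vertices are adjacent and (b) no chords. Since the hypothesis forbids edges between $I_i$ and $I_{i+1}$ only implicitly through a Hamiltonian-path segment, the right argument is: the Hamiltonian path $P$ enters and leaves each interval $I_i$; pick for $p_i$ a vertex of $I_i$ that is Hamiltonian-path-adjacent to a vertex of $I_{i-1}$ and also (for the next step) to a vertex of $I_{i+1}$ — but a single interval traversed once by $P$ has exactly one entry and one exit vertex, which may differ. I would therefore instead argue by the quotient: apply the hypothesis to see that $(G,\prec)/\{I_1,\dots,I_k\}$ has edges only between consecutive intervals, and since $P$ is Hamiltonian it visits all intervals, so the quotient contains the path $I_1 I_2 \cdots I_k$ as a subgraph and, having no other edges, this path is induced in the quotient; then take $p_i$ to be an endpoint of a Hamiltonian-path edge between $I_i$ and $I_{i+1}$ on the $I_i$ side, and verify that these $p_i$ can be chosen consistently because each $I_i$ is an interval of the path and so the sub-path of $P$ restricted to $I_i$ is connected, letting us route from the entry vertex to the exit vertex inside $I_i$ — but that would make the path longer than $k$. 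The resolution is that we do not need the $p_i$ to form a \emph{sub}path of $P$: we only need $p_i p_{i+1} \in E(G)$, and we get this for free by choosing $p_i := v_{b_i}$ and $p_{i+1} := v_{b_i + 1}$, the two endpoints of the boundary-crossing Hamiltonian-path edge; the potential clash (that $p_i$ is asked to be both $v_{b_{i-1}+1}$ from the previous step and $v_{b_i}$ from the current step) is handled by noting $|I_i| \geq 1$ always and, when $|I_i| = 1$, the two demands coincide, while when $|I_i| \geq 2$ they are distinct vertices of the same interval and we simply insert both — wait, that again lengthens the path. I expect the author sidesteps all of this by a slicker observation, so in the write-up I would present the clean version: define $p_i$ as the first vertex of $I_i$; then $p_i \prec p_{i+1}$, the pair $p_i p_{i+1}$ need not be an edge, so actually the correct statement being proved must only claim an induced path in the quotient-lifted sense, and the honest proof is: the quotient graph is exactly the path on $I_1, \dots, I_k$ (edges only between consecutive intervals, and at least those edges since $P$ is Hamiltonian and connected), which is an increasing induced path of size $k$ in the quotient; then invoke the preceding paragraph of the excerpt, which states that one can exhibit an induced path of $G$ from an increasing induced path of the quotient, and moreover since the quotient here literally \emph{is} a path (no extra edges), the lift is an increasing induced path of size $k$ in $(G,\prec)$ itself. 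I would therefore structure the proof around showing the quotient is a path and then lifting, flagging the lift as the step requiring the Hamiltonian-path hypothesis.
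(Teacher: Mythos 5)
There is a genuine gap. Your exploration correctly identifies the central difficulty — simply picking one representative vertex per interval does not produce a path, because consecutive picks need not be adjacent — but you never resolve it. Your final plan ("show the quotient is a path, then invoke the preceding paragraph to lift it") is circular: the preceding paragraph of the paper is an informal remark, and the sentence "this fact is formalized by the following simple lemma" makes clear that \Cref{lem:lifting} \emph{is} the precise statement being asserted there, not something you can cite to prove it. You explicitly flag the lift as "the step requiring the Hamiltonian-path hypothesis," but that is precisely the step you leave unproved.

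The missing idea, which the paper uses, sidesteps vertex-by-vertex selection entirely: take a \emph{shortest increasing path} $P = a_1,\dots,a_\ell$ in $G$ from a vertex of $I_1$ to a vertex of $I_k$. Such a path exists because $(G,\prec)$ has an increasing Hamiltonian path. It is automatically induced: any chord $a_ia_j$ with $j>i+1$ is increasing (since all $a_t$ increase along $\prec$), so $a_1,\dots,a_i,a_j,\dots,a_\ell$ would be a shorter increasing path, contradicting minimality. And it must meet every interval $I_j$: the $a_t$ are strictly increasing, so if $I_j$ were skipped there would be a consecutive pair $a_i \in I_1\cup\dots\cup I_{j-1}$, $a_{i+1}\in I_{j+1}\cup\dots\cup I_k$, giving an edge between intervals at distance at least $2$, contrary to hypothesis. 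Hence $\ell \geq k$, which proves the lemma. This is both the "slicker observation" you suspected existed and the content you were trying to conjure by case analysis on boundary vertices; I would recommend reorganizing your proof around a minimality argument rather than an explicit construction.
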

\begin{proof}
Indeed, consider an increasing shortest path $P = a_1, \dots, a_\ell$ of $(G, \prec)$ from a vertex of $I_1$ to a vertex of $I_k$. Such a path exists since $G$ contains a Hamiltonian increasing path. Moreover, assume for the sake of contradiction that an interval $I_j$ does not contain a vertex of $P$. Then there is an index $i$ with $a_i \in I_1 \cup \dots \cup I_{j-1}$ and $a_{i+1}$ in $I_{j+1} \cup \dots \cup I_k$; a~contradiction.
\end{proof}

We will later refer to an ordered graph of cutwidth at most $1$ as an \emph{increasing sequence of increasing paths} due to the following observation.

\begin{observation}\label{obs:cw1}
The connected ordered graphs of cutwidth equal to 1 are the increasing paths.
The ordered graphs of cutwidth at most $1$ are their ordered subgraphs. 
\end{observation}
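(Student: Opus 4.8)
The plan is to prove both statements at once by reading the structure of $(G,\prec)$ directly off the cutwidth bound, using a simple dictionary between edges and the cuts they cross. Write $v_1 \prec \dots \prec v_n$, and for $1 \le i \le n-1$ call $(\{v_1,\dots,v_i\},\{v_{i+1},\dots,v_n\})$ the $i$-th \emph{cut}. The first thing I would record is that an edge $v_av_b$ with $a<b$ crosses the $i$-th cut precisely when $a \le i \le b-1$, so the set of cuts it crosses is the nonempty integer interval $[a,b-1]$, from which $a$ and $b$ are recovered as its minimum and its maximum plus one. Hence $(G,\prec)$ has cutwidth at most $1$ if and only if no two distinct edges cross a common cut, i.e.\ if and only if the intervals $[a,b-1]$ ranging over all edges $v_av_b$ of $G$ are pairwise disjoint.

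For the easy half I would just check that the increasing path $v_1-v_2-\cdots-v_n$ has exactly the edge $v_iv_{i+1}$ crossing the $i$-th cut, so it is connected with cutwidth exactly $1$ when $n \ge 2$, and that passing to an ordered subgraph cannot increase cutwidth, since deleting an edge removes one of the crossing intervals and deleting a vertex only identifies some cuts and deletes some crossings. This already gives one inclusion in each sentence: increasing paths realise cutwidth $1$ among connected ordered graphs, and every ordered subgraph of an increasing path has cutwidth at most $1$.

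For the converse, assuming cutwidth at most $1$, I would sort the edges $e_1,\dots,e_m$ by the left endpoint of their crossing intervals; pairwise disjointness of intervals on a line forces, writing $e_t=v_{a_t}v_{b_t}$ with $a_t<b_t$, the chain $a_1<b_1\le a_2<b_2\le\cdots\le a_m<b_m$. Two consecutive edges then share a vertex exactly when $b_t=a_{t+1}$, and a maximal run glued together in this way is an \emph{increasing} path $v_{a_s}-v_{b_s}-v_{b_{s+1}}-\cdots$; distinct runs are vertex-disjoint and every remaining vertex is isolated. So a cutwidth-$\le 1$ ordered graph is a disjoint union of increasing paths, i.e.\ an increasing sequence of increasing paths. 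If moreover $G$ is connected with $n \ge 2$, connectivity forces every cut to be crossed, hence crossed exactly once, so the crossing intervals tile $\{1,\dots,n-1\}$; this gives $a_1=1$, $a_{t+1}=b_t$ and $b_m=n$, and since all $n$ vertices must occur we get $b_t=a_t+1$ throughout, so $G$ is exactly $v_1-v_2-\cdots-v_n$.

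I expect the interval bookkeeping above to be completely routine; the one genuine point of care is reconciling the phrase ``ordered subgraph of an increasing path'' with the presence of isolated vertices. I would handle it by observing that a long edge $v_av_b$ with $b>a+1$ forces every one of $v_{a+1},\dots,v_{b-1}$ to be isolated, since any edge incident to such a vertex would cross a cut already crossed by $v_av_b$; therefore no nontrivial path straddles a vertex of another component, and after deleting the isolated vertices one is left with vertex-disjoint increasing paths supported on consecutive blocks of the order, which is visibly a spanning ordered subgraph of a single increasing path. This interaction between long edges and isolated vertices is the only place where a short argument, rather than pure interval manipulation, is required.
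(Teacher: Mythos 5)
The paper states this as an observation with no proof, so there is no in-paper argument to compare against. Your crossing-interval dictionary is the right tool, and it settles the first sentence cleanly: connectivity forces the edge intervals $[a_t, b_t-1]$ to tile $\{1,\dots,n-1\}$, and the absence of isolated vertices then forces $b_t = a_t + 1$ for every $t$, so the graph is exactly the increasing path $v_1 - \cdots - v_n$. The ``easy half'' (increasing paths have cutwidth $1$; passing to an ordered subgraph does not increase cutwidth) is also correct.

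For the second sentence, however, the subtlety you flag about isolated vertices is real, and your proposed fix does not actually close it. The $3$-vertex ordered graph $v_1 \prec v_2 \prec v_3$ with the single edge $v_1 v_3$ has cutwidth $1$, yet it is not an ordered subgraph of any increasing path: an order-preserving embedding would have to send $v_1$ and $v_3$ to consecutive vertices of the path (so that $v_1 v_3$ survives as an edge) while placing $v_2$ strictly between them, which is impossible. Deleting $v_2$, as you suggest, does yield an ordered subgraph of an increasing path, but that establishes the claim for a modified graph rather than the one at hand. So, read literally, the second sentence of the observation fails whenever an isolated vertex sits strictly under a long edge; your own analysis surfaces this rather than resolving it, and the closing sentence ``after deleting the isolated vertices one is left with \dots\ a spanning ordered subgraph'' is not a proof of the statement as worded. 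This is harmless for the paper's later use of the terminology, where the ordered graphs in question are colour classes taken together with only their incident vertices and therefore have no isolated vertices; but if you want a clean statement, either restrict to ordered graphs without isolated vertices, or replace ``ordered subgraph of an increasing path'' by the characterisation your interval argument actually gives (non-trivial components are increasing paths on pairwise disjoint, non-nested spans, with isolated vertices allowed anywhere, including under a long edge).
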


\section{Planar graphs and \notcross{2} graphs}\label{sec:planar}

This section is dedicated to the proof of the following theorem.

\begin{theorem}\label{thm:planar-lower-bound}
Any $n$-vertex planar graph $G$ with a Hamiltonian path $P$ admits an induced path on $\frac{\log n}{2\log \log n} - O(1)$ vertices which is increasing with respect to $P$.
\end{theorem}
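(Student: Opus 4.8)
The plan is to reduce the problem for planar Hamiltonian graphs to the ordered setting, and then run a growth procedure on a $2$-non-crossing ordered graph. First I would fix a planar embedding and note that a planar graph with a Hamiltonian path $P$ has its remaining edges split into two outerplanar ``sides'' of $P$; orienting along $P$, each side is non-crossing, so $(G,\prec)$ is $2$-non-crossing with $\prec$ the order along $P$. It therefore suffices to prove that any $n$-vertex $2$-non-crossing ordered graph with an increasing Hamiltonian path contains an increasing induced path on $\frac{\log n}{2\log\log n}-O(1)$ vertices. I will argue by contradiction: assume the longest increasing induced path has size $p$, and derive $n \le (\text{something like } p)^{O(p)}$, i.e. $p = \Omega(\log n/\log\log n)$.

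The core is a process that maintains two increasing induced paths $Q_1, Q_2$ (not necessarily induced relative to each other), each with a designated \emph{root} endpoint $r_1, r_2$, together with a ``working interval'' $I$ lying entirely after both roots, such that $I$ is adjacent to both $r_1$ and $r_2$ but to no other vertex of $Q_1\cup Q_2$. At each step I want to extend one of the paths by one vertex while keeping the invariant, paying only a bounded multiplicative factor in the length of $I$. Here is where \Cref{lem:gap} / the gap lemma enters: using that $(G,\prec)$ is $2$-non-crossing and that induced paths are short (size $\le p$), for a set of at most two vertices $\{r_1, r_2\}$ one finds a sub-interval $I'\subseteq I$ of size $\ge |I|/(\text{poly}(p))$ avoiding the neighborhood of some root, say $r_1$; the ``gap'' of $r_1$ towards $I$ is large. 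I then let $u$ be the neighbor of $r_1$ in $I$ closest to $I'$, replace $r_1$ by $u$ (appending $u$ to $Q_1$ via the edge $u r_1$), and take the new working interval to be the portion of $I'$ still adjacent to $u$-side structure but cleaned of neighbors of the other root and of the newly lengthened path. One must check that $u$ is nonadjacent to the interior of the new interval (this is exactly what ``closest neighbor'' and non-crossingness buy you) and that adding $u$ does not create a chord with $Q_1$; shortcutting to an induced subpath if necessary, and using that $|Q_1|\le p$, keeps everything induced.

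Running this process, each step shrinks $|I|$ by at most a factor polynomial in $p$ (a logarithmic factor really, but $p$-bounded is all I need), and each step adds a vertex to one of the two paths. Since each $Q_i$ has size at most $p$, the process must terminate after at most $2p$ steps — but it can only terminate when $I$ becomes too small to apply the gap lemma, i.e. when $|I|$ drops below roughly $p^{O(1)}$. Starting from $|I| = n - O(1)$ (essentially all of $V(G)$, after discarding the $O(1)$ vertices before the first root), we get $n \le p^{O(p)}$, hence $p \ge \frac{\log n}{2\log\log n} - O(1)$; feeding this $p$-bounded induced path back through \Cref{lem:lifting} (or directly, since the path is already increasing and induced in $G$) gives the theorem. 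The main obstacle I anticipate is the bookkeeping in the extension step: ensuring that after replacing a root and trimming $I$, the new interval is still adjacent to \emph{both} new roots while being nonadjacent to the rest of both paths, and that the constant in the exponent is small enough to yield the factor $\tfrac12$ rather than something worse — this is where the $2$-non-crossing structure (two pages, hence two ``sides'' each contributing one near-interval of non-neighbors) must be used tightly rather than wastefully. The bounded-genus case I would handle at the end by the standard trick of cutting the surface: a genus-$g$ graph with a Hamiltonian path, after deleting $O(g)$ edges, becomes planar (or one reduces to a planar subgraph on a linear fraction of the path), changing $n$ only by a constant factor and the bound only in the $O(1)$ term.
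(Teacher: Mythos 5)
Your opening reduction is incorrect: a planar graph with a Hamiltonian path $P$ is not, in general, $2$-non-crossing when ordered along $P$. That conclusion would hold if $P$ were a Hamiltonian \emph{cycle}, since the cycle separates the plane into two regions whose interior edges give the two pages; a path leaves its two ends free, and edges can wind around them. Concretely, take $K_5$ minus the edge $v_1v_5$, which is planar and has the Hamiltonian path $v_1,\dots,v_5$: the five chords $v_1v_3$, $v_1v_4$, $v_2v_4$, $v_2v_5$, $v_3v_5$ have crossing-conflict graph equal to the odd cycle $v_1v_3\!-\!v_2v_4\!-\!v_3v_5\!-\!v_1v_4\!-\!v_2v_5\!-\!v_1v_3$, so no partition of the edges into two classes avoids a monochromatic crossing. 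The paper handles this with \Cref{lem:planar2simple}: either every edge has small span along $P$, so a shortest increasing $v_1$-to-$v_n$ path is already long and you are done, or the maximum-span edge $v_iv_j$ gives a long sub-interval $[v_i,v_j]$ on which $v_iv_j$ closes the increasing path into a Hamiltonian \emph{cycle}, and only on \emph{that} sub-interval is the induced ordered graph $2$-non-crossing. Some version of this dichotomy is needed; your argument as written begins from a false premise.

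Beyond that, your growth procedure is close in spirit to the paper's iteration of $k$-good triples (\Cref{cl:aux}), but your invariants differ in a way that matters. You keep both roots $r_1,r_2$ on the \emph{same} side of the interval $I$ and insist $I$ is adjacent to both of them; the paper keeps one root on each side ($u \prec I \prec v$) and requires $I$ to be \emph{non-adjacent} to at least one of $u,v$. The two-sided placement is exactly what makes the forced-crossing argument in \Cref{lem:gap} work: edges from $u$ into $I$ must lie in one page and edges from $v$ in the other, because they enter $I$ from opposite directions, and this forces the quotient graph to be a path. With both roots on the same side of $I$ that conclusion does not follow from non-crossingness, so you would need to prove a different gap lemma, and the non-adjacency of $I$ to one root (rather than adjacency to both) is what lets you move that root to its nearest neighbor and keep the new path induced. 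The $1/2$ in the final bound also has a specific source in the paper --- at the end one has two increasing induced paths $L,R$ with $|L|+|R|\geqslant k$ and takes the longer one --- which your sketch does not yet reproduce. So the high-level scheme is right, but both the reduction step and the invariant need to be repaired along the lines of \Cref{lem:planar2simple}, \Cref{lem:gap}, and \Cref{cl:aux}.
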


\Cref{thm:planar-lower-bound} follows from \Cref{lem:planar2simple} and \Cref{thm:2outerplanar-lower-bound}, the former reducing the problem to finding increasing induced paths in \notcross{2} ordered graphs, and the latter proving the bound for these ordered graphs.

\begin{lemma}\label{lem:planar2simple}
Let $G$ be an $n$-vertex planar graph ordered along the Hamiltonian path $v_1 \prec \dots \prec v_n$.
For any integer $k$, either $G$ contains an induced increasing path on $k$ vertices or there exists $1 \leq i < j \leq n$ such that $j - i > \frac{n-1}{k}$ and $(G, \prec)[v_i, v_j]$ is \notcross{2}.
\end{lemma}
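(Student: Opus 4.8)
The plan is to fix a planar embedding of $G$ and extract the obstruction to $(G,\prec)$ being $2$-non-crossing from the way the Hamiltonian path winds in that embedding; note that, by a standard argument, $(G,\prec)[v_i,v_j]$ is $2$-non-crossing if and only if $G[v_i,v_j]+v_iv_j$ is planar, so morally we are looking for a long interval whose induced subgraph stays planar after joining its two endpoints. First I would delete the edges of $P$: the complement in the sphere of the open arc drawn by $P$ is a single open disk $D$, so the remaining edges of $G$ are pairwise non-crossing chords of $D$. On the boundary circle of $D$ every internal vertex $v_i$ appears twice, once on the ``left side'' $v_i^{+}$ of $P$ (reading $v_1\to v_n$) and once on the ``right side'' $v_i^{-}$, in the cyclic order $v_1, v_2^{+}, v_3^{+}, \dots, v_{n-1}^{+}, v_n, v_{n-1}^{-}, \dots, v_2^{-}$. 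I then classify each non-path edge $v_av_b$ (with $a\prec b$) as \emph{straight} if in the embedding it leaves $v_a$ and reaches $v_b$ on the same side of $P$ (i.e.\ it is a chord $v_a^{+}v_b^{+}$ or $v_a^{-}v_b^{-}$), and \emph{twisted} otherwise.

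The reason for this classification is that on the top arc the order of the $v_i^{+}$ is exactly $\prec$, so the $++$-edges, being pairwise non-crossing chords of $D$, are pairwise non-crossing with respect to $\prec$; likewise the $--$-edges (their endpoints appear on the bottom arc in the reverse of $\prec$, and being non-crossing is reversal-invariant), while path edges cross nothing. Hence $E(G)$ minus the twisted edges is $2$-non-crossing, and in particular \emph{any interval $[v_i,v_j]$ containing no twisted edge is $2$-non-crossing}. Next I would analyse the twisted edges: using again that all chords of $D$ are pairwise non-crossing, the inclusion-minimal twisted edges $v_{a_1}v_{b_1},\dots,v_{a_t}v_{b_t}$ form a ``staircase'' $a_1<\dots<a_t$, $b_1<\dots<b_t$ (a minimal twisted edge cannot have its interval properly contained in another, and two twisted edges sharing an endpoint would be nested). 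Consequently the $t+1$ intervals $[v_1,v_{b_1-1}], [v_{a_1+1},v_{b_2-1}],\dots,[v_{a_{t-1}+1},v_{b_t-1}], [v_{a_t+1},v_n]$ each contain no twisted edge (a non-minimal one would still contain a minimal one, so none occurs), they cover $V(G)$, and consecutive ones overlap because every non-path edge spans at least two path-steps. So if $t<k$ then a routine pigeonhole over these $\le k$ intervals produces one that is $2$-non-crossing and has more than $\frac{n-1}{k}$ path-steps, and we are done.

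It remains to treat the case $t\ge k$, where from (many) twisted edges one must build an increasing induced path on $\Omega(k)$ vertices -- after which one simply rescales $k$ by a constant, which is harmless. Here the idea is that a twisted edge $e=v_av_b$ forces the path to wind: the cycle $e\cup(v_a\cdots v_b)$ separates the sphere, and ``twisted'' means that $P$ leaves $v_b$ towards $v_{b+1}$ on the side of this cycle opposite to the side on which it left $v_a$ towards $v_{a-1}$. Stacking such edges -- after first extracting, via the pigeonhole on the interval-containment order, either a long nested chain or a long crossing-free ``staircase'' among the twisted edges -- stacks these windings, and following $P$ through the nested pockets while using the twisted edges as shortcuts yields an increasing walk whose internal vertices can be chosen pairwise non-adjacent, essentially because any chord among them would be forced to cross one of the stacked twisted chords inside $D$, which is impossible. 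I expect this last step to be the main obstacle: one has to choose the right notion of ``obstruction edge'' (plausibly not every twisted edge, but only those that genuinely participate in an odd crossing cycle) so that $\Omega(k)$ of them really do force an induced subpath of length $\Omega(k)$, and then certify inducedness from the geometry of the stacked windings. The remainder -- the pigeonhole bookkeeping and the verification that twisted-free intervals are $2$-non-crossing -- is routine.
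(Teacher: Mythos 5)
Your proposal has a genuine gap in the case $t\geq k$, and this gap is not a technicality: the step you defer as ``the main obstacle'' is in fact an entirely unproven claim of the same order of difficulty as the theorem itself. You assert that a stack of $k$ inclusion-minimal twisted edges forces an increasing induced path on $\Omega(k)$ vertices, but you give only a vague intuition about ``stacked windings'' and ``pockets,'' with no argument for inducedness and no precise definition of which twisted edges genuinely ``participate in an odd crossing cycle.'' I see no reason this claim is true as stated. The twisted/straight classification depends on the chosen embedding; one can construct embeddings where most chords are twisted yet no long induced increasing path is evident. Moreover, even the reduction to the staircase is not fully justified: two inclusion-minimal twisted chords $v_a^{+}v_b^{-}$ and $v_c^{+}v_d^{-}$ are geometric chords between distinct boundary copies of the vertices, so ``pairwise non-crossing in the disk'' does not directly control their interval pattern in $\prec$; the staircase claim would need its own argument. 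Finally the pigeonhole gives only $\geq\frac{n-1}{k}$, not the strict inequality in the statement, though that is a small matter.

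You should notice that the observation you make in your very first sentence---that $(G,\prec)[v_i,v_j]$ is $2$-non-crossing as soon as $G[v_i,v_j]$ together with the edge $v_iv_j$ is planar, because the Hamiltonian cycle of that subgraph splits its edges into two outerplanar sides---already solves the problem with no embedding surgery whatsoever. Simply take the edge $v_iv_j$ of $G$ that maximizes $j-i$. If $j-i\leq\frac{n-1}{k}$, then \emph{every} edge advances by at most $\frac{n-1}{k}$ steps, so a shortest increasing path from $v_1$ to $v_n$ (which is automatically induced) has at least $k$ vertices. Otherwise $j-i>\frac{n-1}{k}$ and you already hold the edge $v_iv_j$: the interval $[v_i,v_j]$ has a Hamiltonian cycle $(v_i,v_{i+1},\dots,v_j,v_i)$, and cutting along it in any planar drawing partitions the remaining edges into two non-crossing sets. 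This is precisely the paper's proof and it is a few lines; the entire apparatus of deleting $P$, classifying chords by side, and analysing twisted edges is unnecessary.
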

\begin{proof}
Let $v_iv_j$ be the edge of $E(G)$ that maximizes $j - i$.
If $j - i \leqslant \frac{n-1}{k}$, then let $Q=(v_{i_1},\dots,v_{i_\ell})$ be a shortest increasing path from $v_1$ to $v_n$.
We have that $n = i_\ell \leqslant 1 + \ell \cdot \frac {n-1}{k}$, which leads to $ k \leqslant \ell$. Hence, $Q$ is of size at least $k$.

Otherwise, consider $H := G[v_i, \dots, v_j]$. Since $v_iv_j$ is an edge, the cycle $C =(v_i,v_{i+1} ,\dots,v_j)$ is a Hamiltonian cycle of $H$.
Hence, in any plane drawing of $H$, the cycle $C$ separates the plane in two connected regions, say $R_1$ and $R_2$.
Let $(A, B)$ be the partition of $E(H)$ where $A$ contains the edges mapped to $R_1$, plus $E(C)$, and $B$ contains the edges mapped to $R_2$. One can think about edges of $A$ and being \underline{a}bove and \underline{b}elow the path respectively.
By definition, since $H_A=(V(H), A \cup E(C))$ and $H_B=(V(H), B \cup E(C))$ are both outerplanar graphs with exterior face $C$, neither $H_A$ nor $H_B$ contains two crossing edges for the order $v_i \prec \dots \prec v_j$.
\end{proof}

\begin{lemma}\label{lem:gap}
Let $(G, \prec)$ be a \notcross{2} graph ordered along a Hamiltonian path. Then if there exist vertices $u$ and $v$ and an interval $I$ such that $u \prec I \prec v$ and both $u$ and $v$ have a gap less than $g$ towards $I$, then $G$ contains an increasing induced path of length $\left\lfloor \frac{|I|}{g} \right\rfloor - 2$.
\end{lemma}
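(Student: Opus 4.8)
The plan is to fix, once and for all, a partition $E(G) = F_1 \cup F_2$ into non-crossing sets (it exists since $(G,\prec)$ is \notcross{2}) and to work inside $H := (G,\prec)[I]$, which is again \notcross{2} and, as consecutive vertices of $I$ are consecutive along the Hamiltonian path $P$, is ordered along the increasing Hamiltonian sub-path $P[I]$. The starting point is a \emph{barrier} property: if $xy \in F_i$ with $x,y \in I$ and $x \prec w \prec y$, then $uw \notin F_i$ and $vw \notin F_i$. Indeed, $uw \in F_i$ would give the crossing $(uw,xy)$ inside $F_i$ since $u \prec x \prec w \prec y$ (using $u \prec I$), and $vw \in F_i$ would give the crossing $(xy,wv)$ since $x \prec w \prec y \prec v$ (using $I \prec v$). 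Call $w \in I$ an \emph{$F_i$-barrier} if $uw \in F_i$ or $vw \in F_i$; the barrier property says no $F_i$-edge of $H$ has an $F_i$-barrier strictly between its endpoints.

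The key claim is that, for each colour $i$, every sub-interval of $I$ with more than roughly $2g$ vertices contains an $F_i$-barrier. Suppose $K \subseteq I$ contains no $F_1$-barrier. Then every edge from $u$ to $K$, and every edge from $v$ to $K$, lies in $F_2$. If $N(u) \cap K = \emptyset$, then $K$ is a $u$-non-adjacent sub-interval of $I$, hence has at most $g$ vertices (the gap hypothesis is inherited by $K$, since non-adjacent intervals of $K$ are non-adjacent intervals of $I$); likewise if $N(v) \cap K = \emptyset$. Otherwise, let $w_u$ be the last neighbour of $u$ in $K$ and $w_v$ the first neighbour of $v$ in $K$. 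Since $uw_u$ and $w_vv$ both lie in the non-crossing set $F_2$ and $u \prec K \prec v$, they cannot cross, which forces $w_u \preceq w_v$. The part of $K$ strictly after $w_u$ is $u$-non-adjacent, hence has at most $g$ vertices, and the part strictly before $w_v$ has at most $g$ vertices; as $w_u \preceq w_v$ these two parts together cover all of $K$ except possibly $w_u$, so $|K| \le 2g+1$. The same argument works with $F_1$ and $F_2$ exchanged.

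Now cut $I$ greedily into intervals $J_1 \prec \cdots \prec J_m$, closing off a new block as soon as it contains an $F_1$-barrier and an $F_2$-barrier; by the key claim each block has $O(g)$ vertices, so $m = \Omega(|I|/g)$. If $xy$ is an edge of $H$ with $x \in J_a$, $y \in J_b$ and $b \ge a+2$, then, writing $xy \in F_i$, the $F_i$-barrier of $J_{a+1}$ lies strictly between $x$ and $y$, contradicting the barrier property. Hence the edges of $H$ run only inside a block or between consecutive blocks, and \Cref{lem:lifting} yields an increasing induced path on $m$ vertices in $H$, and therefore in $G$.

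The crux is the key claim: the configuration that looks dangerous is when $u$ and $v$ both attach to a long sub-interval of $I$ using only one of the two colours, but the non-crossing requirement between the edges leaving $u$ and those leaving $v$ rules this out once the interval is long, which is exactly what the claim records. What remains is quantitative: to get the stated bound $\lfloor |I|/g \rfloor - 2$ rather than merely $\Omega(|I|/g)$ one must be more economical with block sizes — essentially of size $g$ — which I would obtain by tightening the bound on barrier-free intervals and by a more careful greedy cut that lets consecutive blocks share a barrier; the case $|I| \le 2g$, where the claimed length is non-positive, is handled trivially.
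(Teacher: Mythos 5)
Your proof is correct in structure but falls a constant factor short of the stated bound, as you yourself flag. The route you take is genuinely different from the paper's, so let me compare the two and point to where the factor of~$2$ enters.

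The paper first establishes a \emph{global} colour pattern: after discarding the extreme subintervals of $I$, all edges from $u$ to the remaining middle part lie in one non-crossing class (say $A$) and all edges from $v$ lie in the other (say $B$). This is proved directly from the crossing condition together with the gap hypothesis. Once this is known, one can simply cut the middle of $I$ into $\lfloor |I|/g \rfloor - 2$ blocks of size roughly~$g$; because $u$ and $v$ both have gap less than~$g$, each block sees a $u$-edge (in $A$) and a $v$-edge (in $B$), so each block contains a barrier of each colour, and the quotient-is-a-path argument goes through with blocks of size~$g$. Your argument never fixes this global structure: your barrier property and key claim are local statements, and indeed a sub-interval $K$ of $I$ with \emph{both} its $u$-edges and its $v$-edges entirely in $F_2$ can legitimately have up to $2g+1$ vertices while containing no $F_1$-barrier (this is exactly the situation that the paper's global argument rules out for blocks in the middle of $I$). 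Consequently your greedy cut can only guarantee blocks of size $O(g)$ with constant $\ge 2$, yielding $\Omega(|I|/(2g))$ rather than $\lfloor |I|/g \rfloor - 2$. The ``sharing a barrier between consecutive blocks'' idea you sketch does not by itself recover the factor of~$2$, because a shared endpoint cannot lie strictly between vertices of both adjacent blocks simultaneously, which is what the barrier argument needs. To get the exact constant, it seems one really does need the paper's global observation --- which, notably, is essentially the statement that somewhere in the middle of $I$ the configuration you worry about (all $u$-edges and $v$-edges of the same colour) simply cannot occur.

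That said, the conceptual content of your proof --- the barrier property, the fact that long barrier-free intervals cannot exist, and the deduction that the quotient graph has diameter $\Omega(|I|/g)$ via \Cref{lem:lifting} --- is sound, and the factor-of-$2$ loss only changes the constant in \Cref{thm:2outerplanar-lower-bound} and \Cref{thm:planar-lower-bound}, not their asymptotic form $\Theta(\log n / \log\log n)$.
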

\begin{proof}
Let $(A, B)$ be a partition of $E(G)$ into two non-crossing sets of edges.
Let $I$ be an interval of $P$, and $u$ and $v$ be two vertices with gap less than $g$ towards $I$, such that $u \prec I \prec v$.
We partition $I$ into $p = \lfloor |I|/g \rfloor$ intervals $I_1, \dots, I_p$ each of size at least $g$, such that $I_1 \prec I_2 \prec \dots \prec I_p$. Note that we can assume $p \geqslant 4$ since otherwise the lemma is trivial. Observe that since the gap of $u$ and $v$ towards $I$ is less than $g$, $I_i$ adjacent to both $u$ and $v$ for every $i$.

First, we claim that the edges from $u$ to $I_2 \cup \dots \cup I_{p}$ are all in $A$ or all in $B$.
Indeed, if this is not the case, there exists $ux \in A$ and $uy \in B$ with both $x, y \in I_2 \cup \dots \cup I_{p-1}$.
Moreover, $v$ is adjacent to a vertex $z \in I_1$. Hence, if $zv \in A$, we have $ux$ and $zv$ that are crossing, and otherwise we have $zv \in B$ thus $zv$ and $uy$ are crossing (see \Cref{fig:forced-crossing}).
By a symmetric argument, we obtain that all edges from $v$ to $I_1 \cup \dots \cup I_{p-1}$ are all in $A$ or all in $B$.
Finally, we cannot have edges from $v$ to $I_2 \cup \dots \cup I_{p-1}$ and from $u$ to $I_2 \cup \dots \cup I_{p-1}$ both be in $A$ (resp. in $B$).
In summary, we can assume that edges from $u$ to $I_2 \cup \dots \cup I_{p-1}$ are in $A$ and edges from $v$ to $I_2 \cup \dots \cup I_{p-1}$ are in $B$ and that there is at least one edge between $u$ (resp. $v$) and each of $I_2,\dots,I_{p-1}$.

Let $(H, \prec)$ be the quotient graph $(G[I_2 \cup \dots \cup I_{p-1}], \prec)/ \{I_2, \dots, I_{p-1}\}$. We claim that any edge in $H$ is of the form $I_kI_{k+1}$.
Indeed, assume towards a contradiction that $I_xI_y$ is an edge of $H$ with $x + 1 < y$. Then there exists $a \in I_x$ and $b \in I_y$ such that $ab$ is an edge of $G$. In particular, we have $ab \in A$ or $ab \in B$.
Since $|I_{x+1}| \geqslant g$, there exists $w_1 \in I_{x+1}$ (resp. $w_2$) that is adjacent to $u$ (resp. to $v$).
Hence, depending on whether $ab \in A$ or $ab \in B$, either $ab$ and $uw_1$ are in $A$ and crossing, or $ab$ and $w_2v$ are in $B$ and crossing; a~contradiction.

This implies that $H$ is a simple path of size $p - 2$, which implies the existence of an induced path of length $p-2$ in $G$ by \Cref{lem:lifting}.
\end{proof}

\begin{figure}
    \centering
    \includegraphics{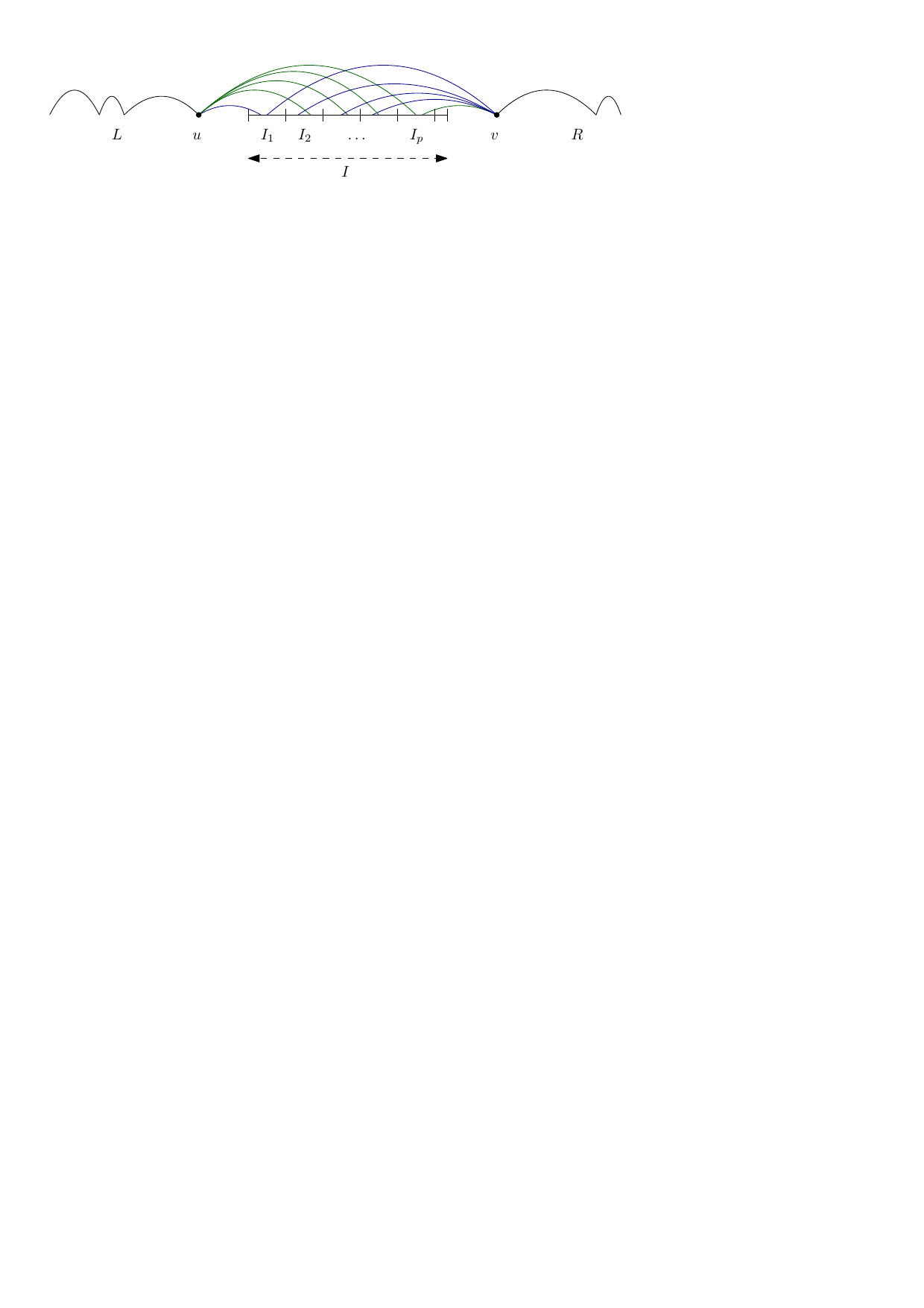}
    \caption{Illustration of the setup for \Cref{lem:gap} and \Cref{thm:2outerplanar-lower-bound}. The edges in $A$ are in blue, and the edges in $B$ are in green. Except for edges incident to $I_1$ and $I_p$, the edges incident to $u$ cannot be of the same color as the edges incident to $v$, otherwise they would cross.}
    \label{fig:forced-crossing}
\end{figure}

\begin{definition}
For any two vertices $u$ and $v$, for any interval $I$, we say that $(u, I, v)$ is a \emph{$k$-good triple} whenever 
\begin{itemize}
    \item $I \neq \varnothing$; and
    \item $u \prec I \prec v$; and
    \item $I = [a, b]$ and $N(u) \cap I = \varnothing$, or $N(v) \cap I = \varnothing$; and
    \item $G$ contains two increasing induced paths $L$ and $R$ such that the largest vertex of $L$ is $u$, the smallest vertex of $R$ is $v$, and $|L| + |R| = k$; and
    \item $N(L - u) \cap I = N(R - v) \cap I = \varnothing$.
\end{itemize}
\end{definition}

\begin{claim}\label{cl:aux}
If $G$ contains a $k$-good triple $(u, I, v)$, then either~$G$ contains an increasing induced path on $\left\lfloor \frac{\log n}{ \log \log n} \right\rfloor - 2$ vertices, or $G$ contains a $(k+1)$-good triple $(u', I', v')$ with $|I'| \geqslant \left\lfloor\frac{\log \log n}{\log n}|I| \right\rfloor$.
\end{claim}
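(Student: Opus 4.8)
The plan is to build the new triple by appending a single vertex of $I$ to one of the two paths $L, R$, while never touching the endpoints that point away from $I$; that way the new paths automatically keep their extreme vertices facing the new interval $I' \subseteq I$, and all that is left to find is a large $I'$ avoiding the relevant neighbourhoods.

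First I would fix notation. By definition of a $k$-good triple one of $N(u)\cap I$, $N(v)\cap I$ is empty, and up to reversing the order $\prec$ (which preserves being \notcross{2}, being ordered along a Hamiltonian path, and transports good triples and induced paths back) I may assume $N(v)\cap I = \varnothing$; then no vertex of $R$ and no vertex of $L-u$ is adjacent to $I$. Set $p := \lfloor \log n/\log\log n\rfloor$ and cut $I$ into consecutive intervals $I_1 \prec \dots \prec I_p$ of sizes at least $\lceil |I|/p\rceil \ge \lfloor \frac{\log\log n}{\log n}|I|\rfloor$ (if $|I| < p$ the required lower bound on $|I'|$ is just ``$I'\ne\varnothing$'', and the arguments below still apply with singleton pieces).

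The easy, and I expect typical, case is when some $j \in \{2, \dots, p\}$ has $N(u)\cap I_j = \varnothing$ while $N(u)$ meets $I_1\cup\dots\cup I_{j-1}$: pick $w\in N(u)\cap(I_1\cup\dots\cup I_{j-1})$, note that $w$ has no neighbour in $L-u$ (because $w\in I$ and $N(L-u)\cap I = \varnothing$), so $L' := (L,w)$ is an increasing induced path with largest vertex $w =: u'$; with $v':=v$, $R':=R$, $I':=I_j$ one checks all five bullets of a $(k+1)$-good triple --- the key points being $N(v')\cap I'\subseteq N(v)\cap I = \varnothing$, $N(L'-u')\cap I' = N(L)\cap I_j = \varnothing$ (from $N(L-u)\cap I = \varnothing$ and $N(u)\cap I_j = \varnothing$), and $|L'|+|R'| = k+1$ --- and $|I'| = |I_j| \ge \lfloor\frac{\log\log n}{\log n}|I|\rfloor$, as wanted.

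The hard part will be the opposite case. If no such $j$ exists, letting $j^\star$ be least with $N(u)\cap I_{j^\star}\ne\varnothing$ (and $j^\star := p+1$ if $N(u)\cap I = \varnothing$), a short argument shows $u$ misses $I_1,\dots,I_{j^\star-1}$ but meets every piece $I_{j^\star},\dots,I_p$. When $j^\star = p+1$ the interval $I$ is disjoint from $N(L\cup R)$, and one can grow $L$ along the Hamiltonian-path edge out of $u$ (whose other end lies strictly between $u$ and $\min I$) while keeping almost all of $I$ as $I'$, once one argues --- using the \notcross{2} structure --- that this neighbour misses $L-u$. The genuinely delicate situation is $j^\star\le p$: then $u$ hits each piece of the suffix $J := I_{j^\star}\cup\dots\cup I_p$, so $u$ has small gap towards $J$, while $v$ misses all of $I$; this is exactly the sandwich that \Cref{lem:gap} converts into a long increasing induced path, \emph{except} that \Cref{lem:gap} needs small gap on both sides, whereas the right side here is completely non-adjacent. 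Breaking this asymmetry --- by replacing $v$ with a vertex to the right of $J$ with small gap towards $J$, by extending the other path with a cleverer choice of $I'$, or by directly reading off an induced path on $\lfloor \log n/\log\log n\rfloor - 2$ vertices from the dense neighbourhood of $u$ in $J$ --- is the step I expect to demand the real work.
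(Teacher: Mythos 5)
Your proposal only completes what you call the ``easy case'' and explicitly leaves the ``hard case'' open, so it is not a full proof. That said, your easy case is correct (with one arithmetic slip: cutting $I$ into $p$ consecutive pieces gives pieces of size at least $\lfloor |I|/p \rfloor$, not $\lceil |I|/p \rceil$, but since $p = \lfloor \log n / \log\log n \rfloor$ this is still $\geq \lfloor \frac{\log\log n}{\log n}|I|\rfloor$).

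The idea you are missing is that the case analysis based on where $N(u)$ lands inside $I$ is unnecessary: the paper treats all cases at once via a single application of \Cref{lem:gap}. With the paper's normalization ($N(u)\cap I = \varnothing$), set $u'$ to be the \emph{largest neighbour of $u$ smaller than $I$} (it exists via the Hamiltonian successor of $u$), and apply \Cref{lem:gap} to the pair $(u', v)$ and the interval $I$. Either this outputs an increasing induced path on $\lfloor |I|/g \rfloor - 2 \geq \lfloor \log n / \log\log n \rfloor - 2$ vertices, or one of $u', v$ has gap at least $g = \lfloor \frac{\log\log n}{\log n}|I|\rfloor$ towards $I$, and the interval $I' \subseteq I$ realizing that gap is the one used. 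In both branches the update is uniform: $L' := L \cdot u'$, $R' := R$, new triple $(u', I', v)$; the ``one endpoint misses $I'$'' condition is witnessed by $u'$ in the first branch and by $v$ in the second, and the remaining bullet points of the definition are direct consequences of $N(u)\cap I = \varnothing$, $N(L-u)\cap I = \varnothing$, $N(R-v)\cap I = \varnothing$, and $I'\subseteq I$. This is exactly the step you anticipate needing in your ``$j^\star \le p$'' subcase (small gap on one side, disjoint on the other), and it also covers your ``$j^\star = p+1$'' subcase without further ado.

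A side remark worth making: your choice of the new root $w$ inside $I$ (in the easy case) makes the verification that $L' = L\cdot w$ is an \emph{induced} path immediate, because $w\in I$ and $N(L-u)\cap I = \varnothing$. In the paper's scheme the new root $u'$ lies strictly between $u$ and $I$, so this particular inducedness check is not obtained for free from $N(L-u)\cap I = \varnothing$, and you are right to flag that this point deserves an argument; your instinct that ``the \notcross{2} structure'' should supply it is, however, not borne out by a direct crossing argument (a chord from $u'$ back into $L$ creates only nested, not crossing, pairs with the edges of $L$). So you should not expect the hard case to close purely by inspecting crossings; the gap lemma, applied to a shifted vertex as above, is the mechanism that actually produces the required interval $I'$.
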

\begin{proof}
Let $(u, I, v)$ be a $k$-good triple, and $g = \left\lfloor \frac{\log \log n}{\log n}|I| \right\rfloor$.
Assume up to symmetry that $N(u) \cap I = \varnothing$.
We denote by $u'$ the largest neighbor of $u$ smaller than $I$ (note that it exists because of the neighbor of $u$ on the Hamiltonian path which cannot be in $I$ due to $N(u) \cap I = \varnothing$).
By \Cref{lem:gap} applied to $u',v$ and $I$,  either $G$ contains an increasing induced path of size $\left\lfloor \frac{|I|}{g} \right\rfloor - 2 \geqslant \left\lfloor \frac{\log n}{\log \log n} \right\rfloor - 2$ and the claim holds, or the maximum between the gap of $u'$ towards $I$ and the gap of $v$ towards $I$ is at least $g$.

If the gap of $u'$ towards $I$ is at least $g$, there exists $I' = [a', b'] \subset I$ such that $b' - a' \geqslant g$ and $N(u') \cap I' = \varnothing$.
We claim that $(u', I', v)$ is a $(k+1)$-good triple.
We clearly have $u' \prec I' \prec v$, and by construction, we have $N(u') \cap I' = \varnothing$.
We consider $L' = L \cdot u'$ and $R' = R$.
We know that $N(L) \cap I' \subseteq N(L) \cap I = (N(L-u) \cap I) \cup (N(u) \cap I) = \varnothing$, and $N(R - v) \cap I' \subseteq N(R - v) \cap I = \varnothing$.
Finally, $|R'| + |L'| = k+1$.

Otherwise, the gap of $v$ towards $I$ is at least $g$.
Consider again $I' = [a', b'] \subset I$ such that $b' - a' \geqslant g$ and $N(v) \cap I' = \varnothing$. Then using the exact same arguments, we obtain that $(u, I', v')$ is a $(k+1)$-good triple.
\end{proof}

\begin{theorem}\label{thm:2outerplanar-lower-bound}
Any $n$-vertex ordered graph $(G,\prec)$ ordered along a Hamiltonian path $P$ that is \notcross{2} admits an increasing induced path on $\frac{\log n}{2\log \log n} - O(1)$ vertices.
\end{theorem}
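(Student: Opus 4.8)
The plan is to bootstrap a single initial good triple and then iterate \Cref{cl:aux} for as many rounds as it stays applicable; once the iteration is forced to stop, the two increasing induced paths stored inside the last good triple are already long enough. Throughout, $n=|V(G)|$ is fixed, so expressions such as $\frac{\log n}{\log\log n}$ behave as constants during the iteration, and we may assume $n$ is large, the additive $O(1)$ in the statement taking care of small cases.

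\textbf{Base case.} First I would produce a $2$-good triple $(u_0,I_0,v_0)$ with $|I_0|=\Omega(n)$. A \notcross{2} graph is the edge-union of two non-crossing ordered graphs on the same vertex order, each of which is outerplanar; hence $G$ has $O(n)$ edges, so a constant fraction of its vertices have degree at most some absolute constant, and we may pick such a bounded-degree vertex $w$ lying in the first half of the order with $w\neq v_1$. Among the $\Omega(n)$ vertices strictly between $w$ and the last vertex $v_n$, all but $O(1)$ are non-neighbours of $w$, so one maximal $w$-free sub-interval $I_0$ has $\Omega(n)$ vertices. Setting $u_0=w$, $v_0=v_n$, $L_0=(w)$ and $R_0=(v_n)$, every requirement of a $2$-good triple is immediate: $u_0\prec I_0\prec v_0$, $N(u_0)\cap I_0=\varnothing$, $L_0-u_0=R_0-v_0=\varnothing$, and $L_0,R_0$ are trivially increasing induced paths with $|L_0|+|R_0|=2$.

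\textbf{Iteration and conclusion.} Then I would apply \Cref{cl:aux} repeatedly, starting from $(u_0,I_0,v_0)$. At round $t$ it takes a $(2+t)$-good triple $(u_t,I_t,v_t)$ and returns either an increasing induced path on $\lfloor\frac{\log n}{\log\log n}\rfloor-2\geqslant\frac{\log n}{2\log\log n}-O(1)$ vertices, in which case we are done, or a $(2+t+1)$-good triple $(u_{t+1},I_{t+1},v_{t+1})$ with $|I_{t+1}|\geqslant\lfloor\frac{\log\log n}{\log n}|I_t|\rfloor$. A routine geometric estimate then gives $|I_t|\geqslant\bigl(\frac{\log\log n}{\log n}\bigr)^{t}|I_0|-O(1)$ after $t$ rounds without stopping, and \Cref{cl:aux} can be applied as long as $|I_t|\geqslant\frac{\log n}{\log\log n}$ (which is exactly what forces its parameter $g=\lfloor\frac{\log\log n}{\log n}|I_t|\rfloor$ to be at least $1$). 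Hence the iteration survives at least $T:=\frac{\log|I_0|}{\log\log n}-O(1)$ rounds (the sharper bound $\frac{\log|I_0|}{\log\log n-\log\log\log n}-O(1)$ only helps). If the long induced path never appears, we therefore reach a $(2+T)$-good triple, which by definition carries increasing induced paths $L,R$ of $G$ with $|L|+|R|\geqslant T+2$, so the larger of $L,R$ is an increasing induced path on at least $\frac{T}{2}+1$ vertices. Since $|I_0|=\Omega(n)$ gives $\log|I_0|=\log n-O(1)$, this is $\frac{\log n}{2\log\log n}-O(1)$ vertices, and it is increasing and induced in $G$ by the definition of a good triple, as required.

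I expect the genuinely delicate point to be the base case: the iteration and counting are routine once \Cref{cl:aux} is in hand, but to obtain the constant $\tfrac12$ exactly we need $|I_0|$ to be \emph{linear} in $n$, so that replacing $n$ by $|I_0|$ costs only an additive $O(1)$ under $\log$, and we need the $\log\log\log n$ slack in the number of rounds to work in our favour, which it does since $\log\log n-\log\log\log n\leqslant\log\log n$. A secondary thing to keep track of is that each round increases $|L|+|R|$ by exactly one while preserving the property ``increasing induced path'', both of which are already established inside the proof of \Cref{cl:aux}.
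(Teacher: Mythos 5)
Your proposal is correct and follows the same overall scheme as the paper's proof: produce an initial $2$-good triple, iterate \Cref{cl:aux} to grow a chain of $k$-good triples while the interval shrinks geometrically, and extract the larger of $L,R$ from the terminal triple. The only genuine difference lies in the bootstrap: you exploit the sparsity of a \notcross{2} graph (twice outerplanar, hence $O(n)$ edges) to directly locate a bounded-degree vertex $w$ whose non-neighbourhood contains a linear-sized interval, whereas the paper applies \Cref{lem:gap} once more to $(v_1,[v_2,v_{n-1}],v_n)$ with gap $\left\lfloor\frac{\log\log n}{\log n}(n-2)\right\rfloor$, yielding an initial $I_0$ of size only $\Theta(n\log\log n/\log n)$. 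Both give the same final bound after taking logarithms, but your version gives $|I_0|=\Omega(n)$ outright, which makes the bookkeeping a bit cleaner (and in fact sidesteps a small slip in the paper's write-up, which sets $n-2 = |I_0|$ where $|I_0|\geqslant g$ was meant). One very small caveat: ``pick a bounded-degree vertex $w$ in the first half'' needs a tiny adjustment, since the low-degree vertices could a priori all lie in the second half of the order; choosing a degree threshold met by, say, a $3/4$ fraction of the vertices (or taking $w$ among the first $3n/4$) fixes this without affecting anything downstream.
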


\begin{proof}
Let $v_1 \prec \dots \prec v_n$ be the vertices of $G$ ordered along $P$.
Let $g = \left\lfloor \frac{\log \log n}{\log n} \cdot (n-2) \right\rfloor$. By \Cref{lem:gap} (applied with gap $g$, $u=v_1,v=v_n$ and $I=[v_2,v_{n-1}]$), either $G$ contains an increasing induced path on $\left\lfloor \frac{\log n}{\log \log n} \right\rfloor -2$ vertices, or one of $v_1$ and $v_n$ has gap at least $g$ towards $[v_2,\dots,v_{n-1}]$.
In particular, there exists an interval $I$ of size at least $g$ that is not adjacent to $v_1$ or to $v_n$.
Hence, $(v_1, I, v_n)$ is a good triple with $|I| \geqslant g$.
By applying iteratively \Cref{cl:aux} starting from $(v_1, I, v_n)$, we either obtain an increasing induced path on $\left\lfloor \frac{\log n}{\log \log n} \right\rfloor -2$ vertices, or a sequence of $i$-good triples $(a_i, I_i, b_i)$ such that $|I_{i+1}| \geqslant \left\lfloor\frac{\log \log n}{\log n}|I_i| \right\rfloor \geqslant \frac{\log \log n}{\log n}|I_i| - 1$, or equivalently $|I_i| \leqslant \frac{\log n}{\log \log n}(|I_{i+1}| + 1)$.
Hence, the equality $|I_{k+1}|=0$ implies \[
n-2 = |I| = |I_0| \leq \sum_{i \leqslant k} \left( \frac{\log n}{\log \log n} \right)^i \leqslant (k+1) \left( \frac{\log n}{\log \log n} \right)^{k+1}
\]

The inequalities above imply $k \geq \frac{\log n}{\log \log n} - O(1)$. In particular, we obtain a $k$-good triple $(a, J, b)$. By definition of $k$-good triples, there exist increasing induced paths $L$ and $R$ of total size $k$.
Considering the path of maximum size among $L$ and $R$ gives an increasing induced path of size $\frac{\log n}{2 \log \log n} - O(1)$.

\end{proof}

\begin{theorem}\label{thm:genus-lower-bound}
Any $n$-vertex graph $G$ embedded in a surface of genus $g$ with a Hamiltonian path $P$ admits an induced path on $\frac{\log n}{2\log \log n} - O(1)$ vertices which is increasing with respect to $P$.
\end{theorem}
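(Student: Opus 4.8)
The plan is to reduce the genus-$g$ case to the planar case (\Cref{thm:planar-lower-bound}) by induction on $g$, losing only a $\mathrm{polylog}(n)$ factor in the number of vertices per unit of genus. This loss is affordable: if $n' = n/\mathrm{polylog}(n)$ then $\log n' = \log n - O(\log\log n)$ and $\log\log n' \le \log\log n$, so $\frac{\log n'}{2\log\log n'} = \frac{\log n}{2\log\log n} - O(1)$, and since the surface is fixed we only iterate $g = O(1)$ times. Concretely it suffices to show: if $G$ is embedded on a genus-$g$ surface with $g \ge 1$ and is ordered along a Hamiltonian path $v_1 \prec \dots \prec v_n$, then either $(G,\prec)$ has an increasing induced path of the target length, or there is an interval $I$ with $|I| = \Omega(n/\mathrm{polylog}(n))$ such that $(G,\prec)[I]$ embeds on a genus-$(g-1)$ surface; in the second case we apply the induction hypothesis to $I$ with the restriction of $P$.

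To produce such an $I$, I would follow the opening of \Cref{lem:planar2simple}: take the edge $v_iv_j$ maximising $j-i$. If $j-i$ is at most roughly $n$ over the target length, a shortest increasing $v_1$--$v_n$ path is already long; otherwise $H := G[v_i,\dots,v_j]$ is a large interval carrying the Hamiltonian cycle $C = (v_i,\dots,v_j,v_i)$, and since $H$ is connected its minimum-genus embedding is cellular --- if that genus is below $g$ then $I = [v_i,v_j]$ works, so assume $H$ is cellularly embedded on the genus-$g$ surface. Now I want to drop the genus by cutting the surface along some non-contractible cycle $C^{\ast}$ of $H$: a short Euler-characteristic computation (covering the separating and non-separating cases, and the non-orientable one) shows that cutting along any non-contractible simple closed curve decreases the genus by at least one, so $H - V(C^{\ast})$, and hence $(G,\prec)$ restricted to any interval avoiding $V(C^{\ast})$, embeds on a genus-$(g-1)$ surface. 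Deleting $V(C^{\ast})$ from $[v_i,v_j]$ splits it into at most $|V(C^{\ast})|+1$ intervals; the largest of them is the $I$ we want, provided $V(C^{\ast})$ lies in only few intervals of $\prec$ --- equivalently, provided $C^{\ast}$ is short or itself essentially an interval.

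The main obstacle is exactly to find a non-contractible cycle of $H$ that is \emph{localized} for $\prec$, rather than spread thinly over $[v_i,v_j]$: a shortest non-contractible cycle need not be. I would handle this by a dichotomy on the edge-width of $H$. If the edge-width is $\mathrm{polylog}(n)$, a shortest non-contractible cycle has few vertices and the cut above immediately leaves a large interval. If the edge-width is large, then $H$ is locally planar --- every cycle that fits inside a short window of $\prec$ is contractible --- and I would instead argue that the machinery of \Cref{sec:planar}, namely \Cref{lem:gap} and the path-growing scheme of \Cref{thm:2outerplanar-lower-bound}, can be replayed inside such a window, since a window strictly shorter than the edge-width behaves, for the purpose of crossings, like an outerplanar ordered graph. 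Reconciling this with the fact that the path-growing procedure starts from intervals far larger than any single window --- so that one must first bring the scale down, either by cutting along an interval-shaped non-contractible cycle extracted from a suitable window or by otherwise isolating a genus-$(g-1)$ interval, while checking that the surviving interval stays of size $n/\mathrm{polylog}(n)$ --- is where I expect the real work of this theorem to lie.
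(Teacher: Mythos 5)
Your proposal misses a much simpler reduction, and the route you sketch has a gap that you yourself flag but do not close. The paper's argument does not induct on genus or cut along non-contractible cycles at all: it simply partitions $V(G)$ into $g+1$ intervals $I_1,\dots,I_{g+1}$ of size at least $\lfloor n/(g+1)\rfloor$ and observes that at least one of the induced subgraphs $G[I_i]$ must be planar --- otherwise the $g+1$ pairwise disjoint subgraphs $G[I_1],\dots,G[I_{g+1}]$ would all be non-planar, forcing the ambient surface to have genus at least $g+1$ (genus of a surface is at least the number of non-planar connected components of an embedded graph). Applying \Cref{thm:planar-lower-bound} to that planar interval, which still carries an increasing Hamiltonian path, loses only a constant factor $g+1$ in $n$, which vanishes into the $O(1)$ error term. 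This is a one-shot reduction, no induction and no topology beyond the additivity of genus.

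Your plan of cutting along a non-contractible cycle and inducting on genus is natural, but the step you acknowledge as ``where I expect the real work to lie'' is genuinely unresolved: a shortest non-contractible cycle of $H$ has no reason to occupy few $\prec$-intervals, and the edge-width dichotomy you propose does not obviously fix this. In the large-edge-width case you suggest replaying the planar machinery ``inside a window shorter than the edge-width,'' but \Cref{lem:gap} and the good-triple iteration of \Cref{thm:2outerplanar-lower-bound} require an interval of size comparable to $n$, not to the edge-width; you point this out but offer no way to bring the two scales together. As it stands, the proposal is an incomplete program rather than a proof, whereas the paper's two-line counting argument settles the question directly.
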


\begin{proof}
Consider the ordered graph $(G,\prec)$ ordered along a Hamiltonian path. We partition it into $g+1$ intervals $I_1,\dots,I_{g+1}$ of size at least $\left\lfloor \frac{n}{g+1} \right\rfloor$. Note that each interval induces an ordered graph with an increasing Hamiltonian path. There must exist $i \in [g+1]$ such that $G[I_i]$ is planar (i.e. of genus $0$). Indeed, otherwise all $g+1$ disjoint subgraphs $G[I_1],\dots,G[I_{g+1}]$ would be non planar, but then the genus of the surface would be at least $g+1$ because the genus of a surface is at least the number of non-planar connected components (see \cite{BHKY62,SB77} for stronger statements in the orientable and non-orientable cases).

By applying \Cref{thm:planar-lower-bound} to $G[I_i]$, we obtain an increasing induced path whose number of vertices is $$\frac{\log (n/(g+1))}{2\log \log (n/(g+1))} - O(1) = \frac{\log n}{2\log \log n} - O(1).$$
\end{proof}

\section{The case of \notcross{k} ordered graphs}\label{sec:k-outerplanar}

We would now like to dive into the analysis of \notcross{k} ordered graphs. The reason we separated the two proofs is that when one tries to grow a set of $k > 2$ paths, similarly as for the good triples of \Cref{cl:aux}, it may happen that the paths ``merge''.
The merging forces to grow sets of trees instead of sets of paths, shrinking the size of obtained induced paths to less than $(\log n)^{2/k}$.
However, this similar flaw of the reasoning is actually a crucial behavior of induced paths in \notcross{k} graphs: as we will see in \Cref{sec:upper-bounds}, one cannot hope for a better bound than $(\log n)^{2/k}$. 
\begin{lemma}\label{lem:gap_and_interval}
Let $(G,\prec)$ (be ordered along a Hamiltonian path and) be \notcross{k}, with associated cover $E_1, \dots, E_k$.
Assume there exists an interval $I$ and $k$~different vertices $x_1, \dots, x_k \in V(G) \setminus I$ such that each $x_i$ has gap at most $g$ towards $I$.
Then there exists a sub-interval $I' \subseteq I$ with $|I'| \geqslant \frac{|I|}{k!} - 5kg$ such that, up to a reordering of $x_1, \dots, x_k$, all edges between $x_i$ and $I'$ are in $E_i$. 
\end{lemma}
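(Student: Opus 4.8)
The plan is to build the sub-interval $I'$ greedily, refining $I$ one vertex $x_i$ at a time so that after $i$ steps the surviving interval "sees" each of $x_1,\dots,x_i$ through a single colour class. Start with $I_0 := I$. At step $i$, suppose we have an interval $I_{i-1} \subseteq I$ such that, for the vertices already processed, all edges to $I_{i-1}$ lie in a fixed colour class. We want to find a long sub-interval of $I_{i-1}$ all of whose edges to the next unprocessed vertex use a single $E_j$. The key observation is the non-crossing structure: fix a colour $E_j$, and look at the edges of $E_j$ from $x$ (where $x$ is the vertex we are processing) into $I_{i-1}$. Because $x \notin I$ and $E_j$ is non-crossing, the neighbours of $x$ in $E_j$ that fall inside $I_{i-1}$ form (essentially) a contiguous block: if $x$ had two $E_j$-neighbours $a \prec b$ in $I_{i-1}$ with some non-$E_j$-neighbour strictly between them, that middle vertex $c$ would be hit by some edge $cy$ of another colour, and — using $x \prec I$ or $I \prec x$ and the gap hypothesis to locate an endpoint of that edge on the correct side — one produces a crossing within $E_j$ or within the colour of $cy$, a contradiction. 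So for each colour $j$, the $E_j$-neighbourhood of $x$ inside $I_{i-1}$ is confined to an interval; since $x$ has gap at most $g$ towards $I$ (hence towards $I_{i-1}$), deleting a bounded number of vertices near the $\le k$ colour-boundaries cuts $I_{i-1}$ into at most $k$ pieces, on each of which all of $x$'s edges use one colour.

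Next I would do the bookkeeping. Among those $\le k$ pieces, keep the largest; it has size at least $(|I_{i-1}| - O(kg))/k$ — the $O(kg)$ accounts for the at most $k$ small buffer zones we delete around the colour transitions, each of size $O(g)$ coming from the gap bound, which is what will eventually sum to the $5kg$ term. This gives $I_i$ together with a colour $c(i)$ assigned to $x_i$. Iterating over all $k$ vertices yields $I_k =: I'$ with
\[
|I'| \;\geqslant\; \frac{|I|}{k!} \;-\; O(kg),
\]
since dividing by $k$ at step $i$ and accumulating the additive losses gives a geometric-type sum dominated by $\sum_{i} \frac{O(kg)}{k!/( \text{stuff})}$, all absorbed into $5kg$ after a routine estimate. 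The colours $c(1),\dots,c(k)$ might not be distinct a priori, but a counting/crossing argument forces them to be: if $x_a$ and $x_b$ both send all their $I'$-edges into the same $E_j$, and both actually have an edge to $I'$ (guaranteed since $|I'| > g \geqslant$ gap), then picking an edge from each and using that $x_a, x_b$ lie outside $I$ — on possibly opposite sides, possibly the same side — we again manufacture a crossing in $E_j$ unless the colours were distinct. Hence after relabelling, $x_i$ is joined to $I'$ only through $E_i$, as required.

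The main obstacle is the structural claim that $x$'s neighbourhood inside a sub-interval, in a fixed colour class, is an interval up to a gap-sized correction: this is where the non-crossing hypothesis and the placement $x \notin I$ must be combined carefully, and one has to check all cases for whether $x \prec I$ or $I \prec x$ and on which side the "witness" edge reaches. Everything downstream — keeping the largest of $\le k$ pieces, summing the additive losses into $5kg$, and the final relabelling so the colours are distinct — is then routine, though the constant $5$ needs the estimates to be done with a little care. I would also note that the same non-crossing lemma should be stated once and reused, both for the refinement step and for the final distinctness argument, to avoid repeating the crossing case analysis.
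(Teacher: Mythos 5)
Your one-vertex-at-a-time refinement plan hinges on a structural claim that is false: for a single vertex $x$, the sets $N_{E_j}(x) \cap I_{i-1}$ need not be (even approximately) contiguous, because two edges sharing the endpoint $x$ can never cross, so the non-crossing hypothesis places \emph{no} constraint on how $x$'s own edges into $I$ are distributed among the colour classes. Your attempt to derive a contradiction from a middle vertex $c$ and an edge ``$cy$'' is exactly where this shows: if $y = x$ then $xa$, $xc$, $xb$ are pairwise nested at $x$ and do not cross, and if $cy$ is supposed to come from some other $x_{i'}$ you have not said which one, on which side of $I$ it lies, or which of its edges you use --- and this auxiliary-vertex mechanism, rather than any intrinsic contiguity of $x$'s colour classes, is precisely what the proof must supply. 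The accounting is also off: keeping the largest of up to $k$ pieces at each of $k$ steps yields a lower bound of order $|I|/k^{k}$, not $|I|/k!$; to reach $k!$ you need the set of available colours to shrink by one at every step, and your proposal gives no mechanism forcing that.

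The paper's argument runs in the opposite direction. It fixes one distinguished vertex $x_k$, chunks $I$ into blocks of size about $g$, assigns to each block $t$ a colour $e(t)$ carried by some edge of $x_k$ into that block, and applies the pigeonhole principle to find two far-apart blocks $I_a \prec I_c$ with $e(a) = e(c)$. The crucial observation is then about the \emph{other} vertices: an $E_{e(a)}$-edge from any $x_i$ with $i \neq k$ to an intermediate block $I_b$ ($a < b < c$) would cross one of $x_k$'s two witness edges, because those two edges bracket $I_b$. This is what removes one colour for the remaining $k-1$ vertices on $I_{a+1} \cup \dots \cup I_{c-1}$, so the recursion proceeds with $k-1$ vertices and $k-1$ colours and the divisors multiply to $k!$. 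That $x_k$'s own edges to the final interval lie in $E_k$ is established only at the very end, by shaving $g$-sized buffers from both ends and using that each $x_i$ ($i<k$) must have an $E_i$-edge into each buffer, so an $E_i$-edge from $x_k$ into the middle would cross. Your closing distinctness-via-crossing argument is sound in spirit (one does need to choose the pair of witness edges correctly depending on which sides $x_a, x_b$ lie), but the refinement steps preceding it must be rebuilt around this bracketing observation for the proof to close.
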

\begin{proof}
We prove the lemma by induction on $k$.
If $k = 1$, then the statement trivially holds since the edges of $E(G)$ are all in $E_1$.

Assume the result holds for any $k' < k$.
Let $I \subseteq V(G)$ be an interval of $\prec$ and $x_1, \dots, x_k \in V(G) \setminus I$ such that each $x_i$ has gap at most $g$ towards $I$.
Consider $E_1 \cup \dots \cup E_k$ a partition of $E(G)$ by non-crossing sets of edges.

We partition $I$ into $p := \lfloor |I|/g \rfloor$ intervals $I_1, \dots, I_p$ each of size at least $g$, with $I_1 \prec I_2 \prec \dots \prec I_p$.
By definition, for any $i \in [k]$ and $j \in [p]$, $x_i$ is adjacent to $I_j$. 
In particular, there exists integers $e(1), \dots, e(p)$ in $[k]$ such that, for each $t \in [p],$ $x_k$ has an edge towards $I_t$ in $E_{e(t)}$.
Observe that, for any integers $a < b < c$ such that $e(a) = e(c)$, none of the edges between a vertex $x_i$ with $i \neq k$ and $I_b$ are in $E_{e(a)}$, since such an edge would cross the edges incident to $x_k$.
Furthermore, since the sequence $e(1), \dots, e(p)$ has values in $[k]$, the pigeon hole principle implies that there exists $a < c$ with $c - a + 1\geqslant p/k$ such that $e(a) = e(c)$. Indeed, the maximum distance between such a pair $(a,c)$ is minimized when identical values of $e(t)$ are consecutive and the maximum number of identical values is minimized.
Up to exchanging $E_{e(a)}$ and $E_k$, one can assume that $e(a) = k$.

Let $I' := I_{a+1} \cup \dots \cup I_{c-1}$. Since vertices $x_1, \dots, x_{k-1}$ have gap at most $g$ towards $I'$, and no edge in $E_k$ towards $I'$, one can apply the induction: there exists $I^{(2)}$ such that for any $i < k$, $x_i$ has only edges in $E_i$ towards $I^{(2)}$.
Let $I^{(2)} = I^{(2)}_l \cup I^{(2)}_m \cup I^{(2)}_r$ with $I^{(2)}_l \prec I^{(2)}_m \prec I^{(2)}_r$ and $ |I^{(2)}_l| = |I^{(2)}_r| = g$. Since for each $i < k$, there is an edge of $E_i$ between $x_i$ and both $I^{(2)}_l$ and $I^{(2)}_r$, all the edges between $x_k$ and $I^{(2)}_m$ are in $E_k$.
Since $|I^{(2)}_m| \geqslant |I^{(2)}| - 2g \geqslant \lfloor \frac{|I'|}{(k-1)!} \rfloor - 5 (k-1) g - 2g \geqslant \frac{|I|}{k!} - \frac{g + 2kg}{k!} - 5(k-1)g - 2g \geqslant \frac{|I|}{k!} - 5kg$.
Hence, $I^{(2)}_m$ satisfies the statement of the lemma.
\end{proof}

\begin{lemma}\label{lem:gap_k}
Let $(G,\prec)$ be ordered along a Hamiltonian path $P$ and \notcross{k}, $I$ be an interval of $G$, and let $x_1, \dots, x_k \in V(G) \setminus I$ be $k$ distinct vertices.
Then either one of the $x_i$ has gap more than $g$ towards $I$ or $G$ contains an increasing induced path of size $\frac{|I|}{g \cdot k!} - 5k - 1$.
\end{lemma}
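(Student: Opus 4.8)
The plan is to prove the dichotomy in its contrapositive form: assuming every $x_i$ has gap at most $g$ towards $I$, I will build the required increasing induced path. The first move is to feed this hypothesis into \Cref{lem:gap_and_interval} with the same parameter $g$. This produces a sub-interval $I' \subseteq I$ with $|I'| \geqslant \frac{|I|}{k!} - 5kg$, together with a relabelling of $x_1, \dots, x_k$ after which every edge between $x_i$ and $I'$ belongs to the part $E_i$. Since $I' \subseteq I$, each $x_i$ also has gap at most $g$ towards $I'$.

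Next I would cut $I'$ into $p := \lfloor |I'|/g \rfloor$ intervals $J_1 \prec \dots \prec J_p$, each of size at least $g$; then each $x_i$ is adjacent to every $J_j$, and by the previous step the edge realizing this adjacency lies in $E_i$. The crux is the claim that in the quotient $(G[I'], \prec)/\{J_1, \dots, J_p\}$ every edge joins two consecutive blocks $J_j, J_{j+1}$. To see this, suppose some edge $ab \in E(G)$ has $a \in J_x$, $b \in J_y$ with $x + 1 < y$, and say $ab \in E_i$. Because $x_i \notin I'$, either $x_i \prec I'$ or $I' \prec x_i$; choose $w \in J_{x+1}$ with $x_i w \in E_i$, which exists since $J_{x+1}$ is a genuine block lying strictly between $J_x$ and $J_y$. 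In the first case one gets $x_i \prec a \prec w \prec b$, in the second $a \prec w \prec b \prec x_i$; either way $ab$ and $x_i w$ cross (they are distinct and supported on four distinct vertices, since $x_i \notin I'$ while $a, b \in I'$), contradicting that $E_i$ is non-crossing.

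Granting the claim, $G[I']$ carries an increasing Hamiltonian path (being induced on an interval of $V(G)$), so \Cref{lem:lifting} applied with the partition $\{J_1, \dots, J_p\}$ yields an increasing induced path on $p$ vertices, which is also an increasing induced path of $G$. It then remains only to note that $p = \lfloor |I'|/g \rfloor \geqslant \frac{|I'|}{g} - 1 \geqslant \frac{|I|}{g \cdot k!} - 5k - 1$, which is the claimed bound (and when this quantity is at most $1$ there is nothing to prove). I expect the only genuine obstacle to be the crossing analysis in the middle paragraph, which is nonetheless essentially the argument of \Cref{lem:gap} transported to the $k$-colour setting: \Cref{lem:gap_and_interval} has arranged matters so that $x_i$ ``owns'' the colour $E_i$, hence any long-range edge inside $I'$ is pinned to a single colour $E_i$ and is then crossed by the $E_i$-edge from $x_i$ into the middle block $J_{x+1}$; the size bookkeeping is routine.
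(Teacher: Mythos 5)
Your proposal is correct and follows essentially the same route as the paper's proof: apply \Cref{lem:gap_and_interval} to extract the sub-interval $I'$ together with the colour assignment, partition $I'$ into $\lfloor |I'|/g \rfloor$ blocks of size at least $g$, rule out long-range edges via a crossing with the appropriate $x_i$-edge into the intermediate block, and invoke \Cref{lem:lifting} on $G[I']$. The only difference is that you spell out the crossing argument (which the paper dispatches with ``in particular''), and you correctly note that \Cref{lem:lifting} should be applied to the induced subgraph $G[I']$ so that the blocks form a genuine partition of its vertex set; the arithmetic is identical.
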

\begin{proof}
Let $v_1 \prec \dots \prec v_n$ be the order induced by $P$, $I$ be an interval of $G$, and let $x_1, \dots, x_k \in V(G) \setminus I$ be $k$ distinct vertices.
Consider $E_1 \cup \dots \cup E_k$ a covering of $E(G)$ by non-crossing sets of edges, and assume that all $x_i$ have gap at most $g$ towards $I$.

Let $I'$ be the interval obtained from \Cref{lem:gap_and_interval}.
We partition $I'$ into $p = \lfloor |I'|/g \rfloor$ intervals $I'_1, \dots, I'_p$ each of size at least $g$, such that $I'_1 \prec I'_2 \prec \dots \prec I'_p$.
Note that, for each $j \leqslant p$, for each $i$, there is an edge in $E_i$ from $x_i$ towards $I'_j$.
In particular, for any integers $j,j' \leqslant p$ such that $|j-j'|\geq 2$, there is no edge between $I'_j$ and $I'_{j'}$.
Hence, a shortest increasing path from $I'_1$ to $I'_p$ is of size at least $p$ (see \Cref{lem:lifting}).

To conclude, we have $p \geqslant \lfloor |I'|/g \rfloor \geqslant \frac{|I|}{g \cdot k!} - 5k - 1$.
\end{proof}

Our proof will construct trees of bounded cutwidth, which are easier to maintain when seen as colored trees.
For any integer $t$, a \emph{$t$-colored tree} is a tree with edges colored by $[t]$.
Given a rooted tree $T$, an order $\prec$ is \emph{compatible with $T$} whenever for any $u, v \in V(T)$ with $u$ an ancestor of $v$, we have $v \prec u$.
For any rooted tree $T$, we denote by $r(T)$ the root of $T$. 
Whenever an order $\prec$ is compatible with a colored tree $T$, for any vertex $v$, the \emph{$\prec v$-cut} is the set of edges going from vertices smaller than $v$ towards vertices not smaller than $v$.

\begin{figure}[ht]
    \centering
    \includegraphics{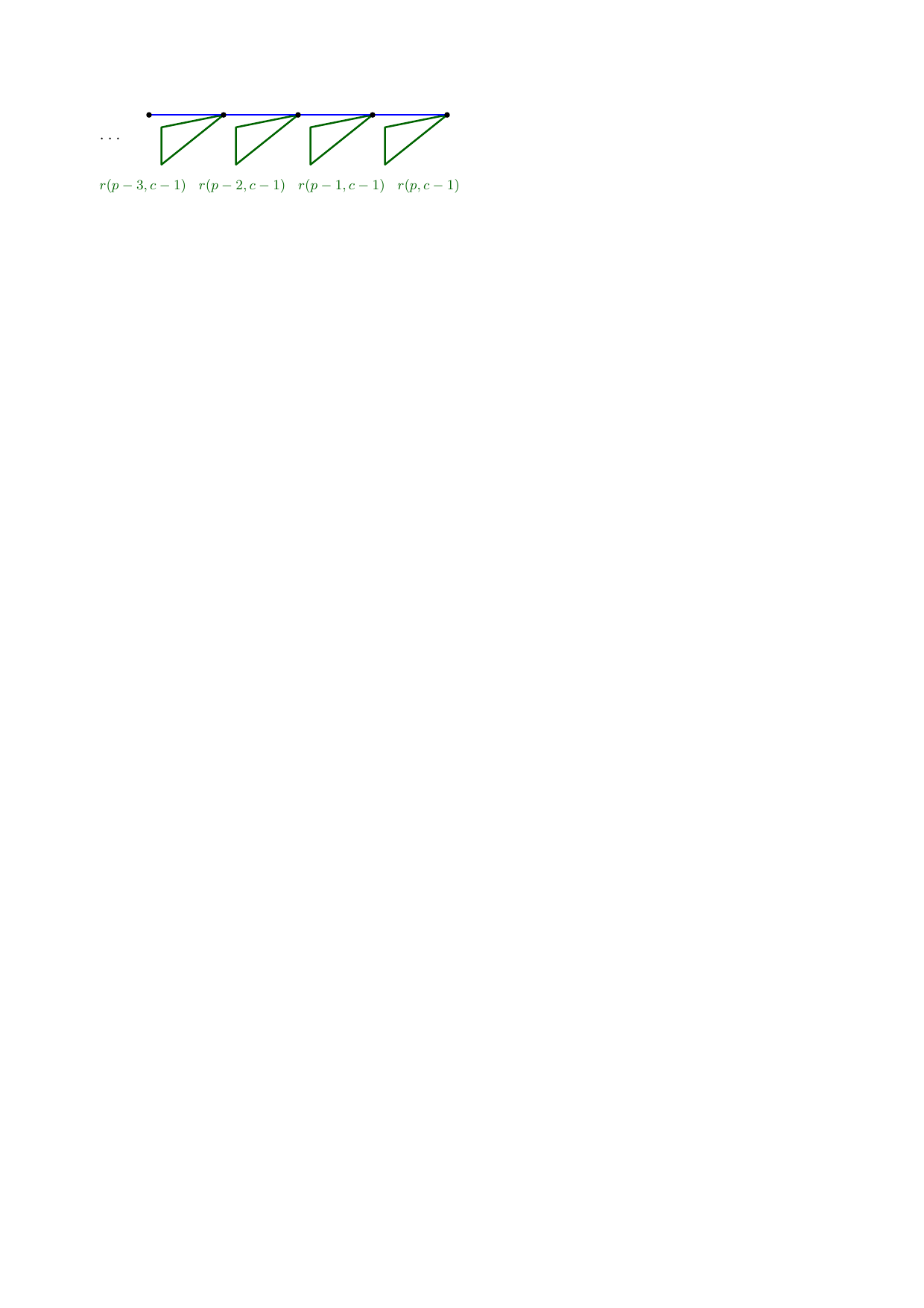}
    \caption{An illustration of the recursive decomposition in the proof of \Cref{lem:cw-diam}.}
    \label{fig:tree-cw}
\end{figure}

To deduce precise bounds on the size of induced paths in the following proof, we will use the following lemma about the relationship between depth and size in a tree compatible with an order of bounded cutwidth.

\begin{lemma}\label{lem:cw-diam}
Let $(T,\prec)$ be an ordered rooted tree compatible with its order, whose cutwidth as an ordered graph is at most $c$ and whose depth\footnote{The number of vertices on a longest root-to-leaf path.} is at most $p$.
Then $|V(T)| \leqslant \binom{p+c-1}{c}$.
\end{lemma}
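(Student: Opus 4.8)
The plan is to prove the bound $|V(T)| \leqslant \binom{p+c-1}{c}$ by induction on the cutwidth $c$, or perhaps more naturally by a double induction on $c$ and $p$ that tracks the structure of the $\prec v$-cuts as one sweeps along the order. The key structural observation is that, since $\prec$ is compatible with $T$, the root $r(T)$ is the $\prec$-largest vertex, and deleting it splits $T$ into subtrees; more usefully, one can look at the smallest vertex and peel it off, or identify a canonical way to decompose $T$ recursively so that each piece has strictly smaller cutwidth. The figure referenced (\Cref{fig:tree-cw}) suggests a recursive decomposition, so I would follow that: locate the first point in the order where the cut has size exactly $c$ (if no such point exists the cutwidth is $\leqslant c-1$ and we apply induction on $c$, noting $\binom{p+c-2}{c-1} \leqslant \binom{p+c-1}{c}$), and use the edges in that cut to carve $T$ into a bounded number of subtrees each of which, in the induced order, has cutwidth at most $c-1$.

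More concretely, I expect the decomposition to work as follows: consider the root-to-leaf paths and the unique vertex $v$ minimizing $v$ over those whose $\prec v$-cut attains the maximum size $c$; the $c$ edges of this cut, together with compatibility of the order, partition the ``earlier'' part of the tree into at most $c$ rooted subtrees hanging below these cut edges, and the ``later'' part is essentially a single subtree of depth at most $p-1$ (one level has been consumed by passing through $v$ and its ancestors, or by the spine). The earlier subtrees each have cutwidth at most $c-1$ because any cut inside them is a sub-multiset of a cut of $T$ minus at least one of the $c$ edges that stays ``active'' throughout. Summing the bound $\binom{(p-1)+(c-1)-1}{c-1}$ over the pieces and adding the later part should telescope, via the Pascal-type identity $\binom{p+c-1}{c} = \binom{p+c-2}{c-1} + \binom{p+c-2}{c}$, to the claimed bound. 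Setting up the exact recursion so the binomials add correctly is the step I would be most careful about.

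The main obstacle I anticipate is pinning down the precise recursive decomposition so that the subtrees genuinely have cutwidth at most $c-1$ \emph{and} depth at most $p-1$ simultaneously, while their count is controlled by $c$; it is easy to lose a factor or fail to drop either parameter. An alternative, possibly cleaner route avoiding geometric case analysis: prove the statement by a direct injective/counting argument. Associate to each vertex $w \in V(T)$ the data of the $\prec w$-cut restricted along the path from $w$ up to the root, recording for each of the at most $c$ ``cut edges'' active at $w$ how deep below its lower endpoint we currently are; this should yield an injection from $V(T)$ into the set of weakly decreasing (or otherwise constrained) $c$-tuples of integers in $\{0, 1, \dots, p-1\}$ with a ``sum/order'' constraint, whose cardinality is exactly $\binom{p+c-1}{c}$ by stars-and-bars. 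I would first sanity-check the bound on small cases ($c=1$ gives $|V(T)| \leqslant \binom{p}{1} = p$, matching \Cref{obs:cw1} since a cutwidth-$1$ ordered tree is an increasing path of depth equal to its size; $c = 0$ gives the empty tree) and then decide which of the two routes makes the binomial bookkeeping least painful before committing to the full write-up.
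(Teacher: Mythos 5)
Your proposal correctly identifies the shape of the argument (peel off structure to drop the cutwidth by one, then close with a binomial identity, with sensible base-case checks), but the decomposition you sketch in Route~1 has a genuine gap, and Route~2 is only a sketch. The paper's decomposition is different and cleaner: since $\prec$ is compatible with $T$, the root is $\prec$-largest and the first vertex in the order is a leaf, so the root-to-first-vertex path spans the whole layout and contributes exactly one edge to \emph{every} cut. Deleting its edges splits $T$ into at most $p$ subtrees hanging at depths $1,\dots,p$, each of cutwidth at most $c-1$ in the induced order (every cut of $T$ loses one guaranteed path edge). This gives $r(p,c) \leqslant \sum_{j=1}^{p} r(j,c-1)$, which is closed by the Hockey-stick identity $\sum_{j=1}^{p}\binom{j+c-2}{c-1}=\binom{p+c-1}{c}$, not by a single Pascal step.

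The gap in your Route~1: after cutting at the first position where the cut reaches size $c$, the $c$ ``earlier'' subtrees do have cutwidth $\leqslant c-1$, but the ``later'' part contains the root, retains cutwidth up to $c$, and there is no reason its depth drops to $p-1$ --- a longest root-to-leaf path of $T$ may lie entirely in the later part. With neither parameter decreasing on that piece, the recursion does not terminate. The bookkeeping you aim for also fails quantitatively: $c$ earlier pieces each bounded by $\binom{(p-1)+(c-1)-1}{c-1}=\binom{p+c-3}{c-1}$ do not sum to the Pascal summand $\binom{p+c-2}{c-1}$ once $p>2$. Your Route~2 (injecting vertices into constrained $c$-tuples in $\{0,\dots,p-1\}$ counted by stars-and-bars) is attractive and is morally what the paper's recursion encodes, but you would still need to specify and verify the injection; as written it remains a plan rather than a proof.
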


\begin{proof}
Let $r(p,c)$ denote the maximum number vertices of such a rooted tree.
Because the order is compatible, the root of each subtree is ordered last in the ordering. Consider the first vertex in the ordering and observe that the path between the root and this vertex is increasing and contributes exactly $1$ to each cut (all cuts are hit because the root is the last vertex). We can recursively bound the size of all maximal subtrees which are disjoint from the edges of this path, as shown in \Cref{fig:tree-cw} because their cutwidth is at most $c-1$. We have $r(p,0) = 1$ and $r(0,c)=0$ as base cases, and from induction and the previous observation, we obtain:
\[
r(p,c) \leqslant \sum_{j=1}^{p} r(j,c-1) \leqslant \sum_{j=1}^{p} \binom{j+c-2}{c-1}
\]
By the Hockey-stick identity, we obtain that:
$$r(p,c) \leqslant \binom{p+c-1}{c}.$$
\end{proof}

\begin{corollary}\label{cor:tree-cw}
Let $T$ be a $k$-colored rooted tree, together with a compatible order $\prec$, such that for any color $j$, the edges of color $j$ form an increasing sequence of increasing paths\footnote{This corresponds to asking for $(T,\prec)$ to have \emph{colored cutwidth $1$} as introduced in \cite{BodlaenderFHWW00}, which proves hardness for computing an ordering that has colored cutwidth $1$ parameterized by the number of colors.}. Then the depth of $T$ is at least: $$2 - k + \frac{k}{e}(|V(T)|)^{1/k}$$
\end{corollary}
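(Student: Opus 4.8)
The plan is to reduce the statement to \Cref{lem:cw-diam} via a cutwidth bound and then to invert the resulting binomial estimate. First I would observe that the colouring hypothesis directly controls the cutwidth of $(T,\prec)$ as an ordered graph: fixing any position in the order $\prec$, the edges crossing that position split according to their colour, and by \Cref{obs:cw1} each colour class, having cutwidth at most $1$, contributes at most one crossing edge; hence at most $k$ edges cross any position and $(T,\prec)$ has cutwidth at most $k$. Since $\prec$ is compatible with the rooted tree $T$ by assumption, \Cref{lem:cw-diam} applies with $c = k$ and with $p$ equal to the depth of $T$, yielding $|V(T)| \leqslant \binom{p+k-1}{k}$.

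It then remains to solve this inequality for $p$. Writing $N = |V(T)|$, I would estimate the binomial coefficient by applying AM--GM to the $k$ consecutive factors $p, p+1, \dots, p+k-1$ whose product is $k!\binom{p+k-1}{k}$; their arithmetic mean is $p + \frac{k-1}{2}$, so $N \leqslant \binom{p+k-1}{k} \leqslant \frac{1}{k!}\left(p + \frac{k-1}{2}\right)^{k}$. Combined with the elementary bound $k! \geqslant (k/e)^{k}$ (which follows from $e^{k} = \sum_{j \geqslant 0} k^{j}/j! \geqslant k^{k}/k!$), this gives $N^{1/k} \leqslant \frac{e}{k}\left(p + \frac{k-1}{2}\right)$, that is, $p \geqslant \frac{k}{e}N^{1/k} - \frac{k-1}{2}$. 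For $k \geqslant 3$ one has $-\frac{k-1}{2} \geqslant 2-k$, so this already implies the claimed bound; the two remaining cases $k \in \{1,2\}$ can be checked directly from the exact values of $\binom{p+k-1}{k}$.

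I expect the only genuine difficulty to be cosmetic, namely pinning down the precise additive term $2-k$: the crude estimate $\binom{p+k-1}{k} \leqslant (e(p+k-1)/k)^{k}$ would only give the weaker conclusion $p \geqslant \frac{k}{e}N^{1/k} + 1 - k$, so one really does need the sharper AM--GM estimate (or the small-$k$ verification). The two structural ingredients — the cutwidth bound and the invocation of \Cref{lem:cw-diam} — are both entirely routine.
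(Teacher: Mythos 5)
Your approach is structurally identical to the paper's: bound the cutwidth of $(T,\prec)$ by $k$ using that each colour class has cutwidth $1$, invoke \Cref{lem:cw-diam} with $c=k$, and then solve $|V(T)|\leqslant\binom{p+k-1}{k}$ for $p$. The only divergence is in the final estimate of the binomial coefficient, and here your version is actually the more careful one. The paper simply cites the bound $\binom{a}{b}<\left(\frac{ae}{b}\right)^b$; as you correctly point out, this yields only $p>\frac{k}{e}|V(T)|^{1/k}+1-k$, which is an additive $1$ short of the stated $2-k$ term, and the strict inequality alone does not bridge the gap since $\frac{k}{e}|V(T)|^{1/k}$ is not generally an integer. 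Your AM--GM refinement, $p(p+1)\cdots(p+k-1)\leqslant\left(p+\frac{k-1}{2}\right)^k$ combined with $k!\geqslant(k/e)^k$, gives $p\geqslant\frac{k}{e}|V(T)|^{1/k}-\frac{k-1}{2}$, which dominates the claimed bound exactly when $k\geqslant3$, and your plan to dispose of $k\in\{1,2\}$ by direct computation is sound (one checks $\binom{p+1}{2}\geqslant N$ does force $p\geqslant\frac{2}{e}\sqrt N$ for all $N\geqslant1$, and $p\geqslant N$ forces $p\geqslant1+N/e$ for all $N\geqslant2$; the lone degenerate case $k=1$, $N=1$ is a single-vertex tree where the stated inequality is in fact false, a corner case the paper also silently ignores). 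In short: same route, but you have diagnosed and repaired a genuine off-by-one in the paper's quoted estimate, at the cost of a slightly longer calculation; since the corollary is only used asymptotically in \Cref{thm:k-outer-thm}, the additive constant is immaterial to the downstream result either way.
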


\begin{proof}
$T$ is the edge union of $k$ ordered subgraphs of cutwidth at most $1$ so it has cutwidth at most $k$. The lower bound on the depth then follows from using \Cref{lem:cw-diam} and the following bound on binomial coefficients: $$\binom{a}{b} < \left(\frac{ae}{b}\right)^b$$
\end{proof}

%Given two intervals $I$ and $J$, with $I \subset J$, we denote by $I \subsetneq^l J$ (resp. $I \subsetneq^r J$) the fact that $J$ contains a vertex strictly smaller (resp. larger) than $I$.

We are now ready for the main part of the proof.

\begin{theorem}\label{thm:k-outer-thm}
Let $(G,\prec)$ be ordered along a Hamiltonian path and \notcross{k}.
Then $G$ contains a induced path of size $\Omega_p \left( \left( \frac{\log n }{\log \log n}\right)^{1/p} \right)$ with $p = \lceil k/2 \rceil$.
\end{theorem}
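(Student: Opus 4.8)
### Proof Proposal

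The plan is to generalize the ``good triple'' machinery of \Cref{sec:planar} from $k=2$ to general $k$, but replacing the pair of induced paths $(L,R)$ by a collection of $k$ rooted structures whose union carries a $k$-colored tree of colored cutwidth $1$, and then invoke \Cref{cor:tree-cw} to turn bounded depth into a long induced path. Concretely, I would maintain a configuration consisting of $k$ distinct ``root'' vertices $x_1,\dots,x_k$, a large interval $I$ lying entirely on one side (or between groups) of the roots with each root having small gap towards $I$, and a $k$-colored forest $F$ whose leaves include the roots, such that $F$ induces no edges into $I$ except at the roots, and such that in the partition $E_1\cup\dots\cup E_k$ witnessing \notcross{k}-ness, the color-$j$ edges of $F$ all lie in $E_j$ (so by \Cref{obs:cw1} each color class is an increasing sequence of increasing paths, giving colored cutwidth $1$). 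The size parameter to track is $|V(F)|$, analogous to $k=|L|+|R|$ in the planar proof.

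The growth step is the heart of the argument and mirrors \Cref{cl:aux}. Given such a configuration with interval $I$, set $g \approx \bigl(\tfrac{\log\log n}{\log n}\bigr)\,|I|$ and apply \Cref{lem:gap_k} to $x_1,\dots,x_k$ and $I$: either we already get an increasing induced path of size $\tfrac{|I|}{g\cdot k!}-O_k(1) = \Omega_k(\log n/\log\log n)$ — which is far more than we need — or some root $x_i$ has gap more than $g$ towards $I$, so there is a sub-interval $I'\subseteq I$ of size at least $g$ with $N(x_i)\cap I'=\varnothing$. Now run \Cref{lem:gap_and_interval} on $I'$ and the $k$ roots to pass to a further sub-interval $I''$ of size at least $\tfrac{g}{k!}-O_k(kg)\cdot$(small) on which, after reindexing colors, all edges from $x_j$ to $I''$ lie in $E_j$. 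Pick for the distinguished root $x_i$ its neighbor $u\in I$ that is closest to $I''$ (it exists since $x_i$ has a neighbor on the Hamiltonian path adjacent to $I'$ but $N(x_i)\cap I'=\varnothing$, so $u$ lies strictly between $x_i$ and $I''$, hence is outside $I''$). Add $u$ to $F$ as a child of $x_i$, coloring the edge $x_iu$ with color $i$; this keeps color class $i$ a union of increasing paths because $u$ is the \emph{closest} neighbor to $I''$ and the old color-$i$ edges of $F$ already avoid $I'$. Promote $u$ to be the new root in place of $x_i$, shrink the interval to $I''$ (minus a bounded number of guard blocks of width $g$ at each end, exactly as in \Cref{lem:gap_and_interval}, to preserve the ``all roots adjacent, rest of $F$ non-adjacent'' invariant), and we have a new configuration with $|V(F)|$ increased by one and $|I|$ shrunk by a factor $O_k\!\bigl(\tfrac{\log n}{\log\log n}\bigr)$.

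Iterating this step: starting from $I_0 = [v_2,v_{n-1}]$ with $x_1,\dots,x_k$ chosen as the $k$ smallest vertices (or, more carefully, $k$ vertices all smaller than $I_0$, or split as $\lceil k/2\rceil$ small and $\lfloor k/2\rfloor$ large so that \Cref{lem:gap_k}'s side condition is met — this is a setup detail to get the clean $p=\lceil k/2\rceil$ exponent), after $m$ steps we have $|I_m|\geq n/\bigl(C_k\,\tfrac{\log n}{\log\log n}\bigr)^m$ up to additive $O_k(1)$ terms, so we can perform $m = \Omega_k\!\bigl(\tfrac{\log n}{\log\log n}\bigr)$ steps before $I$ becomes empty. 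At that point $F$ is a $k$-colored forest (actually we can keep it a tree by rooting all the $x_i$'s under a common dummy root, or just take the largest component) with $|V(F)| = \Omega_k(\log n/\log\log n)$ and colored cutwidth $1$. By \Cref{cor:tree-cw}, its depth is $\Omega_k\bigl(|V(F)|^{1/k}\bigr) = \Omega_k\bigl((\log n/\log\log n)^{1/k}\bigr)$, and a longest root-to-leaf path of $F$ is an increasing induced path of $G$ of that size. Finally, the improvement from exponent $1/k$ to $1/\lceil k/2\rceil$ comes from exploiting that half the work is ``for free'': one should grow roots only on one side while the interval is squeezed between a fixed small block and a fixed large block, effectively halving the number of color classes that interact with any single growth step — so the colored cutwidth of the relevant forest is $\lceil k/2\rceil$ rather than $k$, and \Cref{cor:tree-cw} is applied with that parameter.

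The main obstacle I anticipate is maintaining the invariant that the color-$j$ edges of $F$ form an increasing sequence of increasing paths — i.e. that adding the new edge $x_iu$ never creates a crossing within color class $i$ or a vertex of degree $\geq 3$ in a bad position. This forces the careful choice of $u$ as the neighbor of $x_i$ closest to the (new, shrunken) interval, together with the bookkeeping of ``guard'' sub-blocks of width $g$ at both ends of the interval at every step so that every current root genuinely has an edge into the interval while no non-root vertex of $F$ does; getting the constants in the interval-shrinkage recursion to close (the $\tfrac{|I|}{k!}-5kg$ bound from \Cref{lem:gap_and_interval} composed with the $|I'|\geq g$ bound from \Cref{lem:gap_k}) while still leaving $\Omega_k(\log n/\log\log n)$ iterations is the delicate quantitative core. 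The secondary subtlety is arranging the $\lceil k/2\rceil$ versus $k$ exponent cleanly, which I expect requires starting with roots split on both sides of $I$ and arguing that a growth step on the left side only ever touches $\lceil k/2\rceil$ color classes.
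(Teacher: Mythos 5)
Your high-level plan coincides with the paper's: maintain a configuration of $k$ trees (split into left and right groups) around a shrinking interval, add one vertex per step using \Cref{lem:gap_k}, and at the end extract one tree $T$ with $|V(T)| = \Omega_k(\log n/\log\log n)$ whose colored cutwidth is $\lceil k/2\rceil$, then apply \Cref{cor:tree-cw}. Your reading of where the $\lceil k/2\rceil$ exponent comes from (each side carries only half the colors, and the final tree lives on one side) is correct. However, there are two concrete gaps in the proposed growth step.

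First, the extra application of \Cref{lem:gap_and_interval} to $I'$ and the $k$ roots is both inadmissible and numerically vacuous. Its hypothesis requires \emph{every} $x_j$ to have gap at most $g$ towards $I'$, but you just produced $I'$ precisely because $x_i$ has \emph{no} neighbour in $I'$, i.e. its gap towards $I'$ is $|I'|-1 \geq g-1$, violating the assumption. Even setting that aside, the conclusion of \Cref{lem:gap_and_interval} gives $|I''| \geq \tfrac{|I'|}{k!} - 5kg$, and with $|I'|$ only of order $g$ this is negative for every $k\geq 1$; the lemma yields nothing. In the paper this step simply does not appear: the colour of the new edge is not read off from the \notcross{k} partition $E_1,\dots,E_k$ at all. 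Instead, the edge is always given the colour $a$ of the tree $L_a$ it is attached to, and the ``increasing sequence of increasing paths'' invariant for colour $a$ is maintained by a purely positional argument (the new root is the \emph{largest} neighbour of the old root that is strictly below the current interval, so it dominates every earlier vertex touched by colour $a$). You should drop the lem:gap\_and\_interval detour entirely and replace the choice of new root ``inside $I$, closest to $I''$'' by ``largest neighbour of $x_i$ smaller than the interval'', as in the paper.

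Second, you do not handle the case where the new vertex $v$ is \emph{already} in the forest (two trees merge). Your remark that one can ``root all the $x_i$'s under a common dummy root, or just take the largest component'' does not address the real issue: after a merge you would have only $k-1$ distinct roots, and \Cref{lem:gap_k} needs $k$ distinct vertices. The paper resolves this by replacing the absorbed tree $L_a$ with a fresh \emph{singleton} tree whose vertex is the smallest vertex larger than $\bigcup_i V(L_i)$ but still smaller than the (slightly trimmed) interval, and re-checking all invariants for the new tuple. This is also what controls the bookkeeping: the invariant only asks that \emph{some} root be non-adjacent to $I$ (not that all roots be adjacent), and it is re-established at the end of each step by one fresh application of \Cref{lem:gap_k} to the updated roots and a shrunk interval. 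Without the singleton replacement your induction does not close, and without dropping the spurious lem:gap\_and\_interval step the interval sizes do not contract by the claimed factor.
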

\begin{proof}

The proof consist of growing a set of trees, called a \emph{tree surrounding} in the following, around an interval of the vertex ordering $\prec$ in such a way that any path from a root to a leaf is an induced path. For technical reasons, we cannot hope for all trees to have an increasing size through each growing step. However, we will show that the sum of their sizes does increase.
This allows us to obtain at least one tree of large size at the end of the growing process.
This can be achieved by proving the invariants of \cref{it:1,it:2,it:3,it:special} and using \Cref{cl:aux-k-outer-lemma}.
However, trees of large size do not necessarily have large depth. Hence, we provide a coloring of edges that is a certificate of small cutwidth for our trees. This is achieved by the additional invariants of \cref{it:4,it:5}.

A \emph{tree-surrounding of an interval $I$} is a tuple $(L_1, \dots, L_p, I, R_1, \dots, R_{k-p})$ where $L_1, \dots, L_p$ as well as $R_1, \dots, R_{k-p}$ are disjoint colored trees, and $I$ is an interval of $V(G)$, that satisfies the following list of assumptions for any $i \in [p]$ and $j \in [k-p]$.

\begin{enumerate}
    \item\label{it:1} $V(L_i) \prec I \prec V(R_j)$.
    \item\label{it:4} The edges of $L_i$ have colors in $[p]$ and the edges of $R_j$ have colors in $\{p+1, \dots, k\}$. 
    \item\label{it:5} The set of edges of color $i$ (for any $i \in [p]$) induces an increasing sequence of increasing paths, and similarly for edges of color $j + p$ (for any $j \in [k-p]$).
    Furthermore, $r(L_i)$ is larger (resp. $r(R_j)$ is smaller) than any other vertex incident to an edge of color $i$ (resp. $j + p$).
    \item\label{it:2} If $v \in V(L_i) - r(L_i)$ or $v \in V(R_j) - r(R_j)$ then $N[v] \cap I = \varnothing$.
    \item\label{it:3} If $v \in V(L_i) - r(L_i)$ (resp. $v \in R_j - r(R_j)$), the parent of $v$ is the largest (resp. smallest) neighbor of $v$ smaller (resp. larger) than $I$.
    \item\label{it:special} Let $A := \{r(L_a)~:~a \in [p]\} \cup \{r(R_b)~:~b\in [k-p]\}$. There exists $v \in A$ with $N(v) \cap I = \varnothing$.
\end{enumerate}

\begin{claim}\label{cl:aux-k-outer-lemma}
Let $(L_1, \dots, L_{p}, I, R_1, \dots, R_{k-p})$ be a tree-surrounding of $I$.

Then for any integer $g$, either $G$ has an increasing induced path on $\frac{|I|}{g \cdot k!} - 5k - 2$ vertices, or there exists $(L'_1, \dots, L'_{p}, I', R'_1, \dots, R'_{k-p})$ a tree-surrounding of $I' \subseteq I$ such that:
\begin{itemize}
    \item $|I'| \geqslant g$
    \item $\sum_i |V(L_i')| + \sum_j |V(R_j')| = 1 + \sum_i |V(L_i)| + \sum_j |V(R_j)| $.
\end{itemize}
\end{claim}
\begin{proof}
By \cref{it:special}, there is a $u \in \{r(L_i)~:~i \in [p]\} \cup \{r(R_j)~:~j\in [k-p]\}$ such that $N(u) \cap I = \varnothing$.
We assume that $u = r(L_a)$ for some $a$, but a symmetric proof would work for $u = r(R_b)$.
Consider the largest neighbor $v$ of $u$ that is smaller than $I$ (it must exist because of the neighbor of $u$ on the Hamiltonian path).
Note that by definition, $v \prec I$, so $v \not \in V(R_1) \cup \dots \cup V(R_{k-p})$.

We need to distinguish between the case where $v$ is in $V(L_1) \cup \dots \cup V(L_k)$ and the case where it is not, since in the former, we need to change our trees in a precise way. Hence, we exhibit $L'_1, \dots, L'_p$ and $R'_1, \dots, R'_{k-p}$ in both cases, before proving the existence of either a long induced path or an interval $I'$ satisfying the claim.

If $v$ is not in $V(L_1) \cup \dots \cup V(L_k)$, then we consider $L'_1, \dots, L'_p, R'_1, \dots, R'_{k-p}$ where $R'_i := R_i$ for all $i$, $L'_i := L_i$ if $i \neq a$ and $L'_a$ is obtained from $L_a$ by defining $v$ as ancestor of $r(L_a)$, with the edge $r(L_a) v$ of color~$a$ (color $b + p$ for the symmetric setting).
Note that all trees are disjoint.
For any interval $I' \subseteq I$, \cref{it:1,it:2,it:3,it:4,it:5} are clearly satisfied, since all trees but $L_a$ are unchanged, and $u = r(L_a)$ is not adjacent to $I$.
Hence, it remains to exhibit $I' \subseteq I$ satisfying \cref{it:special}.

Otherwise, there exists $b$ such that $v \in V(L_b)$.
We consider $L'_1, \dots, L'_p, R'_1, \dots, R'_{k-p}$ with $R_i' := R_i$ for all $i$, $L_i' := L_i$ when $i \not \in \{a, b\}$, $L_a'$ to be the single-vertex tree containing the smallest vertex larger than $\cup_i V(L_i)$, and $L'_b$ is obtained from $L_a \cup L_b$ by setting $v$ as ancestor of $r(L_a)$ with the edge $r(L_a) v$ of color~$a$.
We start by proving that for any interval $I' \subseteq I - m$, with $m$ the smallest vertex of $I$, \cref{it:1,it:2,it:3,it:4,it:5} hold.
Since $r(L_a') \preceq m$, we have $L_a' \prec I'$. \cref{it:2,it:3,it:4} clearly hold, and for \cref{it:5}, it is sufficient to observe that it holds for color $a$ and that the sets of edges and roots assigned to other colors did not change.
By assumption, we know that $r(L_a)$ is larger than any vertex incident to an edge of color $a$, except from the edge $r(L_a) v$. Since $r(L_a) \prec v$, we have that $r(L_a) v$ is the only edge of color $a$ incident to $v$. Hence, edges of color $a$ still make an increasing sequence of increasing paths. Furthermore, $r(L_a')$ is larger than $v$, so larger than any vertex incident to an edge of color $a$.
Again, it remains to exhibit $I' \subseteq I - m$ satisfying \cref{it:special}.

To do so, we apply \Cref{lem:gap_k} for both cases with $A := \{r(L_1'), \dots, r(L_p'), r(R_1'), \dots, r(R_{k-p}')\}$ as the set of $k$ vertices, the interval $I - m$, and gap value $g$.
%:= \frac{|I|}{k! \log n}$.
Either $G$ contains an induced path of size $\frac{|I| - 1}{g \cdot k!} - 5k -1\geqslant \frac{|I|}{g \cdot k!} - 5k - 2$ and the claim holds, or there exists $a \in A$ and $I' \subseteq I - m $ such that $N[a] \cap I' = \varnothing$, and $|I'| \geqslant g$, proving \cref{it:special}.
\end{proof}

With this claim, we are able to prove the theorem.
Let $p := \lceil \frac k 2 \rceil$.
If $G$ contains an increasing induced path of size $\left(\frac{\log n}{\log \log n}\right)^{1/p}$, then the theorem holds. Otherwise, assume $G$ does not contain such a path.

We build a sequence of tree-surroundings of intervals $(\mathcal{T}_t(I_t))_{t\geqslant 1}$ satisfying
\begin{itemize}
    \item $\mathcal{T}_t(I_t)$ is of the form $(L_1, \dots, L_p, I_t, R_1, \dots, R_{k-p})$; and
    \item $|I_t| \geqslant \frac{n}{2 (\log n)^{t}}$; and
    \item The sum of the sizes of the trees of $\mathcal{T}_t(I_t)$ is at least $t$.
\end{itemize}

For the base case, let $L_i$ be the one-vertex tree containing the $i$-th vertex of $P$ for each $i \in [p]$ and $R_j$ be the one-vertex tree containing the $n-j + 1$-th vertex of $P$ for $j \in [k - p]$.

It remains to find an interval $I_1$ for which $(L_1, \dots, L_p, I_1, R_1, \dots, R_{k-p})$ is a tree-surrounding.
Let $I = [p + 1, n - {k - p}]$.
For any interval $I' \subseteq I$, we have $V(L_i) \prec I' \prec V(R_j)$ by construction, and obtain immediately \cref{it:1,it:2,it:3,it:4,it:5}.
We must now satisfy \cref{it:special}.
We apply \Cref{lem:gap_k} on the set of vertices $A := \{r(L_1), \dots, r(L_k), r(R_1), \dots, r(R_{k-p})\}$, the interval $I$, and the gap value $g_0 := \frac{n}{2 (\log n)}$.
For $n$ large enough, since $G$ does not have an increasing path of size
\[
\frac{n - k}{g_0 \cdot k!} - 5k - 2 \geqslant \frac{n}{2 g_0 \cdot k!} - 5k - 2 \geqslant \frac{\log n}{k!} - 5k - 2 \geqslant (\log n)^{1/p},
\]
there exists $a \in A$ of gap more than $g_0$ towards $I$. We let $I_1 \subseteq I$ be any interval of $g_0$ vertices not adjacent to $a$, and observe that $\mathcal{T}_1(I_1) := (L_1, \dots, L_p, I_1, R_1, \dots, R_{k-p})$ is a tree-surrounding.

Assume now that $\mathcal{T}_t(I_t)$ is defined, then by an application of \Cref{cl:aux-k-outer-lemma} with target interval size $g_t := \frac{n}{2 (\log n)^{t+1}}$, and since $G$ does not contains an increasing induced path of size $\frac{|I_t|}{g_t \cdot k!} - 5k - 2 \geqslant \frac{\log n}{k!} - 5k - 2 \geqslant (\log n)^{1/p}$, there is a good tree-surrounding $\mathcal{T}_{t+1}(I_{t+1})$ such that $|I_{t+1}| \geqslant g_t$ and the sum of the sizes of the trees of $\mathcal{T}_{t+1}(I_{t+1})$ is at least $t+1$.
Note that this sequence is well defined as long as $g_t \geqslant 1$.
In particular, by taking $t = \frac{\log n}{2 \log \log n}$, we have 

\[
\log (g_t) =
\log n - \log \left(2(\log n)^{t+1} \right) = \log n - 1 - (t+1) \log \log n
= (\log n)/2 -1 - \log \log n > 0.
\]

Hence, for $t = \frac{\log n}{2 \log \log n}$, $\mathcal{T}_t(I_t)$ contains a tree with at least $\frac{t}{k}$ vertices, say $T$.
By \cref{it:3}, each root-to-leaf path of $T$ is induced; in addition,
\cref{it:4} ensures that $T$ is $p$-colored, \cref{it:3} ensures that either $\prec$ or $\succ$ is compatible with $T$, and \cref{it:5} ensures that monochromatic edges of $T$ form an increasing sequence of increasing paths.
Hence, by \Cref{cor:tree-cw}, $T$ has depth at least $2 - p + \frac{p}{e}(|V(T)|)^{1/p}$.
Hence, $G$ contains an increasing induced path of size at least
\[
2 - p + \frac{p}{e}\left(\frac{\log n}{2k \log \log n}\right)^{1/p} = \Omega_p \left( \left( \frac{\log n }{\log \log n}\right)^{1/p} \right).
\]
\end{proof}

\section{Tight examples}\label{sec:upper-bounds}

We will provide in this section a construction of $n$-vertex $k$-non-crossing ordered graphs, that do not contain induced paths of size $O\left(\log n\right)^{1/\ell}$ with $\ell= \lceil k/2 \rceil$ (almost matching \Cref{thm:k-outer-thm}).

Let us introduce some terminology.
If $(G, \prec)$ is an ordered graph, the first and the last vertex of $(G, \prec)$ are its \emph{extremities}.
Given two ordered graphs $G$ and $H$, we define the \emph{gluing of $G$ and $H$}, denoted $G \cdot H$, as the ordered graph on $|V(G)| + |V(H)| - 1$ vertices, where the first $|V(G)|$ vertices induce a copy of $G$, and the last $|V(H)|$ vertices induce a copy of $H$. In particular, the last vertex of the copy of $G$ is the first vertex of the copy of $H$.

For each integer $p \geqslant 0$, we build the ordered (outerplanar) graph $U_p$ with $n =  2^{p} + 1$ vertices as follows:
$U_0$ is the ordered path on two vertices, and for each integer $p$, $U_{p+1}$ is obtained by adding an edge between the two extremities of $U_p \cdot U_p$.
One can easily check by induction that for any $p$, $U_p$ is \notcross{1}, and that it contains an ordered Hamiltonian path.
\begin{lemma}
For any integer $p$, the ordered graph $U_p$ has no induced path on $2 p + 3$ vertices.
\end{lemma}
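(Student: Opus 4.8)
The plan is to prove the bound by induction on $p$, matching the recursive construction of $U_p$ via gluing. The base case $p=0$ is immediate: $U_0$ is a path on two vertices, which has no induced path on $2\cdot 0 + 3 = 3$ vertices. For the inductive step, suppose $U_{p-1}$ has no induced path on $2(p-1)+3 = 2p+1$ vertices, and consider $U_p$, which is $U_{p-1}\cdot U_{p-1}$ together with the extra edge $e$ joining the two extremities (call them $a$, the first vertex of $U_p$, and $d$, the last). Let $m$ be the shared vertex of the two glued copies. Suppose for contradiction that $Q$ is an induced path on $2p+3$ vertices in $U_p$.

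The key structural observation I would use is that edges of $U_p$ never cross the "cut vertex" $m$ other than… actually more carefully: the only edges of $U_p$ with one endpoint in the first copy (strictly before $m$) and one endpoint in the second copy (strictly after $m$) are exactly the edge $ad$; every other edge lies entirely within one of the two copies (this follows by induction from the gluing definition, since gluing places one graph strictly before and one strictly after a shared vertex). Consequently, $m$ is a cut vertex separating the first copy from the second, and $\{a,d\}$ together with $m$ form a small separator. So the path $Q$, restricted to the first copy $U_{p-1}$ (on vertex set up to and including $m$) and to the second copy, decomposes into subpaths. Since $Q$ is induced and $U_{p-1}$ has no induced path on $2p+1$ vertices, the portion of $Q$ inside each copy has at most $2p$ vertices. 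The overlap at $m$ and the possible use of the edge $ad$ must then be accounted for: in the worst case $Q$ enters the first copy, reaches $m$, continues into the second copy; this gives at most $(2p) + (2p) - 1 = 4p - 1$ vertices, which is not yet a contradiction, so more care is needed.

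The sharper argument I expect to need: analyze how $Q$ can traverse the two copies. Because $m$ is a cut vertex, $Q\cap V(\text{first copy})$ and $Q\cap V(\text{second copy})$ are each connected (hence subpaths of $Q$), sharing only $m$ if both are nonempty. If $Q$ uses the edge $ad$, then $a$ and $d$ are consecutive on $Q$, so $Q$ is the concatenation of a subpath from $m$-side into the first copy ending at $a$, the edge $ad$, and a subpath from $d$ into the second copy; but then $Q$ does not pass through $m$, so $Q$ lies in $(\text{first copy})\cup(\text{second copy})$ meeting at no vertex — wait, that is impossible unless one part is empty, contradiction — so actually if $ad\in Q$ then $Q$ is contained in the first copy minus $m$ plus the second copy minus $m$, which is disconnected; hence $Q\subseteq$ one side, impossible for a path using both $a$ and $d$ unless $p$ is tiny. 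I would resolve this by observing that if $ad \notin E(Q)$, then $Q$ is an induced path in $U_{p-1}\cdot U_{p-1}$ (the graph without $e$); since this graph is just two copies of $U_{p-1}$ glued at $m$, any induced path through it uses at most $(2p+1) + (2p+1) - 1 = 4p+1$ vertices passing through $m$ once — still too weak. The honest fix is to strengthen the induction hypothesis: I would instead prove by induction the statement that every induced path in $U_p$ has at most $2p+2$ vertices, \emph{and} every induced path with one endpoint equal to an extremity of $U_p$ has at most $p+2$ vertices (or a similar refined pair of bounds). With such a strengthened hypothesis, a path crossing through $m$ splits into two pieces each with an endpoint at $m$, which is an extremity of neither copy — so I would instead track bounds relative to the extremities $a$ and $d$: a path in $U_{p-1}$ starting at its first (or last) vertex has at most $p+2$ vertices. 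Then a path through $m$ in $U_p$ contributes at most $(2p+2) + (2p+2)$ only if it does not touch the extremities, but the extra edge $ad$ is the mechanism forcing the extremity bound, and combining yields $2p+3$ as a strict upper bound.

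\textbf{Main obstacle.} The delicate point is that the naive induction ($2p+1$ vertices in $U_{p-1}$, glued at one vertex) gives roughly $4p$, which is far from $2p+3$; the construction is efficient precisely because the added edge $ad$ between extremities creates a shortcut that no induced path can exploit without "wasting" most of one copy. I expect the real work is in formulating the correct strengthened induction hypothesis — almost certainly a statement bounding, simultaneously, (i) the length of an arbitrary induced path in $U_p$ and (ii) the length of an induced path in $U_p$ having an endpoint at an extremity — and then checking the handful of cases for how $Q$ interacts with $m$, with $a$, with $d$, and with the edge $ad$. The case analysis itself is routine once the right hypothesis is in place; finding that hypothesis is the crux.
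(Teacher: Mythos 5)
Your plan lands on essentially the same strengthened induction hypothesis that the paper uses (bound of $p+2$ for induced paths with an endpoint at an extremity of $U_p$, together with a bound of $2p+2$ for arbitrary induced paths), so the overall approach is right. However, the sketch has a factual slip and stops short of the step that actually makes the arithmetic close.

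The slip: you write that $m$, the shared vertex of the two glued copies of $U_{p-1}$ inside $U_p$, ``is an extremity of neither copy.'' This is backwards --- $m$ is the last vertex of the first copy and the first vertex of the second, so it is an extremity of \emph{both} copies. This is not a side issue: it is exactly why the strengthened extremity bound of the induction hypothesis applies to the two pieces you get when a path passes through $m$. Without this, your estimate ``$(2p+2)+(2p+2)$'' for a path through $m$ has no reason to improve, and the induction does not close.

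The missing step: after the strengthening, the clean way to finish is to observe that the three boundary vertices of $U_p$, namely the two extremities $a$, $d$ and the gluing vertex $m$, form a triangle ($a$--$m$ and $m$--$d$ are the edges added when building each copy of $U_{p-1}$, and $a$--$d$ is the edge added to form $U_p$). Because an induced path contains at most two of any three pairwise-adjacent vertices, and two if and only if they are consecutive, a maximal induced path of $U_p$ can touch $\{a,m,d\}$ in at most two positions, and those positions are adjacent. Writing $a_j$ for the first visit to $\{a,m,d\}$ and $a_\ell$ for the last, the prefix $a_1\dots a_j$ lies entirely in one copy of $U_{p-1}$ and ends at an extremity of that copy, hence has at most $p+1$ vertices by the extremity bound for $U_{p-1}$; symmetrically for the suffix; and $\ell-j\le 1$. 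Summing gives at most $2(p+1)$ vertices in general, and at most $(p+1)+1 = p+2$ if the path starts or ends at an extremity of $U_p$ (the case $j=1$ or $\ell=k$). Your plan gets as far as ``check the handful of cases,'' but this triangle observation is the concrete lever that makes the cases add up to $2p+2$ instead of $\sim 4p$; as written, the proposal does not supply it, and the induction as you sketched it would not close.
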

\begin{proof}
Let $f : p  \mapsto p + 2$.
We prove the following stronger property by induction: any induced path of $U_p$ ending at an extremity of $U_p$ is of size at most $f(p)$, and any induced path of $U_p$ is of size at most $\max(f(p), 2 f(p - 1))$.

The graph $U_0$ has no induced path on $3$ vertices.
For any integer $p$, if $U_p$ satisfies the hypothesis, then an induced path of $U_{p+1}$ is contained in $[1, 2^p +1]$, in $[2^p + 1, 2^{p+1} +1]$, or it contains a vertex of $\{1, 2^p + 1, 2^{p+1} + 1\}$.
In the first two cases, we can immediately apply induction.
Hence, we assume we are in the last case, and let $P = a_1 \dots a_k$ be an induced path with $a_j$ the first vertex of $P$ in $\{1, 2^p + 1, 2^{p+1} + 1\}$.
Clearly, $\{a_1, \dots, a_j\}$ is contained in $[1, 2^p +1]$ or in $[2^p +1, 2^{p+1} + 1]$. So the path $a_1 \dots a_j$ is of size $\leqslant f(p)$ by induction.
With a symmetric argument, we define $a_\ell$ to be the last vertex of $P$ in $\{1, 2^p + 1, 2^{p+1} + 1\}$, and obtain that $a_\ell \dots a_k$ is of size $\leqslant f(p)$.
Since $a_j$ and $a_\ell$ are adjacent, we have $\ell - j \leqslant 1$, so $k \leqslant 2 f(p)$.
Furthermore, if $j = 1$ or $\ell = k$, then $k \leqslant f(p) + 1$ which implies $k \leqslant f(p+1)$.
\end{proof}

Now, for each pair of non-negative integers $p$ and $k$, we will construct the ordered graphs $G(k, p)$. 

The construction is made by induction as follows.
\begin{itemize}
\item For any integer $k \geqslant 0$, $G(k, 0)$ is the ordered path on 3 vertices.
\item For any integer $p > 0$, $G(0, p)$ is the graph $U_{p+1}$.
\item Given two positive integers $k$ and $p$, we define the graph $H(k, p)$ as the gluing $C_1 \cdot \ldots \cdot C_d$, where $d := |V(G(k-1, p)) | - 1$, and each $C_i$ is a copy of $G(k, p-1)$.
We denote by $c_i$ the first vertex of $C_i$ for $i \in [d]$, and by $c_{d+1}$ the last vertex of $C_d$. 
\item $G(k, p)$ is obtained from $H(k, p)$ by adding:
\begin{itemize}
\item a complete bipartite graph between $\{c_1, c_{d+1}\}$ and $\{c_i~:~ 1 < i < d+1\}$; and
\item a copy of $G(k-1, p)$ on $\{c_1, \dots, c_{d+1}\}$.
\end{itemize}
\end{itemize}

\newcommand{\Int}[1]{#1^o}
In the rest of the section, we will denote by $X(k, p)$ the set of vertices $\{c_1, \dots, c_{d+1}\}$. Given an interval $I$, the \emph{interior of $I$}, denoted $\Int{I}$, is the interval consisting of elements of $I$ without its extremities. We extend this notation to subgraphs induced by intervals such as the $C_i$.

\begin{lemma}
For any two integers $k$ and $p$, for any two copies $C_i$ and $C_j$ with $i < j$ of $G(k, p-1)$ in the construction of $H(k, p)$, there is no edge between $\Int{C_i}$ and $\Int{C_j}$ in $G(k, p)$.
\end{lemma}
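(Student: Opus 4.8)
The plan is purely structural. Every edge of $G(k,p)$ is either an edge already present in the gluing $H(k,p) = C_1 \cdot \ldots \cdot C_d$, or one of the edges added in the last step of the construction. I would show that an edge of the first kind lies entirely inside a single copy $C_\ell$, while an edge of the second kind is incident to the boundary set $X(k,p) = \{c_1, \dots, c_{d+1}\}$; since the interiors $\Int{C_i}$ avoid $X(k,p)$ and lie in pairwise distinct copies, no edge can join $\Int{C_i}$ to $\Int{C_j}$ with $i < j$.

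Concretely, I would first record two facts about the construction. (a) In the gluing $H(k,p) = C_1 \cdot \ldots \cdot C_d$, consecutive copies share exactly one vertex, namely $V(C_\ell) \cap V(C_{\ell+1}) = \{c_{\ell+1}\}$, while $V(C_\ell) \cap V(C_m) = \varnothing$ for $|\ell - m| \geqslant 2$; moreover, by the definition of gluing and an immediate induction, every edge of $H(k,p)$ has both endpoints in a common $V(C_\ell)$. (b) The edges of $G(k,p)$ that are not edges of $H(k,p)$ are exactly those of the complete bipartite graph between $\{c_1, c_{d+1}\}$ and $\{c_i : 1 < i < d+1\}$ together with those of the copy of $G(k-1,p)$ on $\{c_1, \dots, c_{d+1}\}$, and all of these have both endpoints in $X(k,p)$. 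Then I would use that for every $\ell$ the interior is $\Int{C_\ell} = V(C_\ell) \setminus \{c_\ell, c_{\ell+1}\}$; in particular $\Int{C_\ell} \cap X(k,p) = \varnothing$, and by (a) this also gives $\Int{C_\ell} \cap V(C_m) = \varnothing$ for every $m \neq \ell$. Now suppose for contradiction that $xy \in E(G(k,p))$ with $x \in \Int{C_i}$, $y \in \Int{C_j}$, $i < j$. By (b), $xy$ is not a newly added edge, since $x \notin X(k,p)$; hence $xy \in E(H(k,p))$, so by (a) both endpoints lie in some common $V(C_\ell)$. But $x \in \Int{C_i} \cap V(C_\ell)$ forces $\ell = i$ and $y \in \Int{C_j} \cap V(C_\ell)$ forces $\ell = j$, contradicting $i < j$.

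There is essentially no obstacle here: the only adjacencies created at each step of the construction run through the boundary vertices $c_1, \dots, c_{d+1}$, and the interiors are defined precisely so as to exclude those vertices. The only point deserving a little care is the bookkeeping in (a) of which vertex two consecutive copies of the gluing share, and noting that this shared vertex is always a $c$-vertex, hence absent from both interiors.
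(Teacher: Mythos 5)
Your proof is correct and follows the same structural observation as the paper's (much terser) proof: every edge of $G(k,p)$ either lies inside a single copy $C_\ell$ (coming from $H(k,p)$, since gluing adds no edges) or has both endpoints in $X(k,p)$ (the newly added edges), and the interiors $\Int{C_i}$ are disjoint from $X(k,p)$ and from every other copy. You have simply unpacked the bookkeeping that the paper leaves implicit; there is no difference in the underlying idea.
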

\begin{proof}
Indeed, a gluing does not add edges, and all the edges in $E(G(k, p)) - E(H(k, p))$ are between vertices of $X(k, p)$.
\end{proof}

\begin{lemma}
$G(k, p)$ is \notcross{(2k + 1)}.
\end{lemma}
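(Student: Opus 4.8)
The plan is to prove by induction on $k$ (and, within each $k$, on $p$) that $G(k,p)$ admits an edge-partition into $2k+1$ non-crossing sets, tracking enough structural information to carry the induction through the gluing step. The base cases are immediate: $G(k,0)$ is a path on three vertices, hence non-crossing (\notcross{1}, so certainly \notcross{(2k+1)}), and $G(0,p) = U_{p+1}$ is \notcross{1} as already observed in the construction of the $U_p$. So assume $k \geqslant 1$ and $p \geqslant 1$, and assume the claim holds for $(k-1,p)$ and for $(k,p-1)$.

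First I would handle the edges inside the copies $C_1, \dots, C_d$ of $G(k,p-1)$. By the inductive hypothesis for $(k,p-1)$, each $C_i$ has an edge-partition into $2k+1$ non-crossing classes; the key point is that distinct copies live in disjoint intervals of $\prec$ sharing only consecutive endpoints (the gluing structure), so edges from different $C_i$ never cross each other. Concretely, if $e \in E(C_i)$ and $e' \in E(C_j)$ with $i < j$, then every endpoint of $e$ is $\preceq c_{i+1} \preceq c_j \preceq$ every endpoint of $e'$, which rules out a crossing $a \prec c \prec b \prec d$. Hence I can re-use the same $2k+1$ colors across all the $C_i$ simultaneously: color class $t$ globally is $\bigcup_i (\text{color class } t \text{ of } C_i)$, and each remains non-crossing. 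This accounts for all of $E(H(k,p))$ using $2k+1$ colors.

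Next I would deal with the two groups of edges in $E(G(k,p)) \setminus E(H(k,p))$, both of which live entirely on the vertex set $X(k,p) = \{c_1, \dots, c_{d+1}\}$: (i) the copy of $G(k-1,p)$ placed on $X(k,p)$, and (ii) the complete bipartite graph between $\{c_1, c_{d+1}\}$ and $\{c_2, \dots, c_d\}$. For (i), by the inductive hypothesis for $(k-1,p)$, the copy of $G(k-1,p)$ on $X(k,p)$ is \notcross{(2k-1)}, giving $2k-1$ new non-crossing classes. For (ii), the bipartite graph splits naturally into two non-crossing stars: the star centered at $c_1$ (edges $c_1 c_i$ for $1 < i < d+1$) is non-crossing since all edges share the leftmost endpoint, and likewise the star centered at $c_{d+1}$ is non-crossing. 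That is $2$ more classes. Altogether I have used $(2k+1) + (2k-1) + 2$ colors, which is too many — so the crux is to show the $2$ star-classes can be \emph{merged into the $2k+1$ classes already used for $E(H(k,p))$} (or, alternatively, that the $2k-1$ classes for (i) can be folded into the $2k+1$, reducing by $2$ elsewhere).

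The main obstacle — and where I would focus the real work — is this bookkeeping: showing that the total palette collapses to exactly $2k+1$. The natural argument: the vertices $c_1$ and $c_{d+1}$ are the extremities of the whole graph $G(k,p)$, so the star at $c_1$ consists of edges from the global minimum, and the star at $c_{d+1}$ of edges to the global maximum; an edge from the global minimum can cross an edge $e$ only if $e$ has its right endpoint strictly inside the span, and one checks that within a single color class of $E(H(k,p))$ there is "room" to absorb these — more precisely, I expect that the copy $C_1$ only meets colors in a way that leaves one class free to the left, and symmetrically for $C_d$ on the right, or that the star at $c_1$ can be added to whichever $C_i$-class contains no edge incident to $c_1$ from the right. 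If a clean absorption is not available, the fallback is to reorganize: use $2k-1$ classes for the inner $G(k-1,p)$ copy on $X(k,p)$, $2$ classes for the two stars, and then show the $C_i$ edges together with these need only $2k+1$ by noting that the $C_i$ edges avoid the two "star colors" because $\Int{C_i}$ is disjoint from $X(k,p)$ except at $c_i, c_{i+1}$. I would verify carefully that an edge of some $C_i$ incident to $c_i$ or $c_{i+1}$ does not cross a star edge at $c_1$ or $c_{d+1}$ — this is the one place a genuine crossing could sneak in, and resolving it (likely by a parity/position argument on $i$ relative to $1$ and $d$) is the heart of the proof.
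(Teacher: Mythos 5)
You have the right skeleton: the inductive $(2k+1)$-coloring of $E(H(k,p))$ (reusing one palette across all $C_i$, which you correctly justify), the $(2k-1)$-coloring of the $G(k-1,p)$ copy on $X(k,p)$, and the two non-crossing stars. You also correctly sense that the whole proof hinges on collapsing these two $(2k+1)$-palettes into one. But you stop precisely at the decisive step: you say you ``would verify carefully'' that a $C_i$-edge incident to $c_i$ or $c_{i+1}$ does not cross a star edge, you call this ``the one place a genuine crossing could sneak in,'' and you speculate that resolving it needs ``a parity/position argument on $i$ relative to $1$ and $d$.'' That speculation points away from the actual resolution, which is both simpler and more general than what you propose.

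The fact you need is: \emph{no} edge of $E(H(k,p))$ crosses \emph{any} edge of $E(G(k,p)) \setminus E(H(k,p))$ --- not just star edges, and with no dependence on $i$. Every edge $e$ of $H(k,p)$ lies inside a single copy $C_i$, so both of its endpoints lie in the closed interval $[c_i, c_{i+1}]$; every edge $f$ of $E(G(k,p)) \setminus E(H(k,p))$ has both endpoints in $X(k,p) = \{c_1, \dots, c_{d+1}\}$. A crossing $a \prec c \prec b \prec d$ would force one endpoint of $f$ to lie strictly inside the open interval $(c_i, c_{i+1})$, which contains no vertex of $X(k,p)$; contradiction. (This is exactly the ``no edge between $\Int{C_i}$ and $\Int{C_j}$'' flavour of reasoning the paper packages as a preceding lemma.) Once you have this, there is no ``absorption'' or ``folding'' to do: you may merge your two $(2k+1)$-partitions class-by-class in any pairing whatsoever, and every merged class stays non-crossing because its two halves never interact. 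Your proposal, as written, leaves this lemma unproved and gestures at the wrong mechanism for proving it, so it has a real gap --- though a short one to close.
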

\begin{proof}
We prove the property by following the induction of the construction of $G(k, p)$.
Note that $G(k, 0)$ and $G(0, p)$ are \notcross{1} by definition.
Assume that for some integers $k, p$, we have that for any pair $(k', p')$ lexicographically smaller than $(k, p)$, the ordered graph $G(k', p')$ is \notcross{(2k'+1)}.

The ordered graph $H(k, p)$ is a gluing of \notcross{(2k+1)} graphs. Hence, it is \notcross{(2k + 1)}.
For any two edges $e \in E(H(k, p))$ and $f \in E(G(k, p)) - E(H(k, p))$, $e$ and $f$ are non-crossing: indeed, $f$ is an edge between extremities of copies of $G(k, p-1)$, and $e$ is inside a copy of $G(k, p-1)$.
Hence, it suffices to prove that there exists a partition of $E(G(k, p)) - E(H(k, p))$ into $2k + 1$ non-crossing sets of edges.
This is true by induction: consider a partition of the edges of the copy of $G(k-1, p)$ spanning $X(k, p)$ into $2k - 1$ sets of non-crossing edges, and finish the partition of $E(G(k, p)) - E(H(k, p))$ by adding the two sets $\{c_1c_i~:~ 1 < i \leqslant d\}$ and $\{c_ic_{d+1}~:~ 1 < i \leqslant d \}$.
\end{proof}

\begin{lemma}
$|V(G(k, p))| = 2^{\binom{k + p + 1}{p}} + 1$.
\end{lemma}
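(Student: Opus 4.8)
The plan is to prove the size formula by induction following exactly the recursive construction of $G(k,p)$, tracking the size function $N(k,p) := |V(G(k,p))|$. First I would set up the base cases: $G(k,0)$ is the path on $3$ vertices, so $N(k,0) = 3 = 2^1 + 1 = 2^{\binom{k+1}{0}} + 1$; and $G(0,p) = U_{p+1}$, which has $2^{p+1}+1$ vertices by the construction of the $U_i$, matching $2^{\binom{p+2}{p}} + 1 = 2^{\binom{p+2}{2}}+1$... wait, we need $\binom{0+p+1}{p} = \binom{p+1}{p} = p+1$, so indeed $N(0,p) = 2^{p+1}+1$. Both base cases check out, so the formula is consistent with the recursion's starting points.

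Next I would handle the inductive step for $N(k,p)$ with $k,p \geqslant 1$. The key observation is how gluing affects vertex count: the gluing $C_1 \cdot \ldots \cdot C_d$ of $d$ copies of a graph on $m$ vertices has $d(m-1)+1$ vertices, since consecutive copies share one vertex. Here $d := |V(G(k-1,p))| - 1 = N(k-1,p) - 1$ and each $C_i$ is a copy of $G(k,p-1)$ on $m := N(k,p-1)$ vertices. Therefore $|V(H(k,p))| = d(m-1)+1 = (N(k-1,p)-1)(N(k,p-1)-1) + 1$, and since $G(k,p)$ is obtained from $H(k,p)$ only by adding edges (the complete bipartite graph on $X(k,p)$ and the copy of $G(k-1,p)$ spanning $X(k,p)$), we get $N(k,p) = (N(k-1,p)-1)(N(k,p-1)-1) + 1$.

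Now I would plug in the claimed formula. Writing $N(k,p) - 1 = 2^{\binom{k+p+1}{p}}$, the recurrence becomes
\[
2^{\binom{k+p+1}{p}} = 2^{\binom{k+p}{p}} \cdot 2^{\binom{k+p}{p-1}},
\]
which reduces to the identity $\binom{k+p+1}{p} = \binom{k+p}{p} + \binom{k+p}{p-1}$ — precisely Pascal's rule. So the whole argument comes down to matching the multiplicative recurrence on sizes (minus one) with the additive Pascal recurrence on binomial coefficients in the exponent. I would carry this out by strong induction on the pair $(k,p)$ in lexicographic order, exactly as the earlier lemmas in the section do, invoking the inductive hypothesis on $(k-1,p)$ and on $(k,p-1)$.

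The main thing to be careful about — rather than a genuine obstacle — is bookkeeping of which exactly the ``$-1$'' terms are: the definition of $d$ as $|V(G(k-1,p))|-1$ rather than $|V(G(k-1,p))|$, and the gluing formula losing one vertex per overlap, must line up so that the net effect is the clean product $(N(k-1,p)-1)(N(k,p-1)-1)+1$. Once that is verified, the exponent identity is just Pascal's rule and the proof is complete. I expect no step to be substantively difficult; the content is entirely in correctly extracting the size recurrence from the construction.
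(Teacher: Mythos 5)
Your proposal is correct and follows essentially the same route as the paper: setting up the recurrence $N(k,p) = (N(k-1,p)-1)(N(k,p-1)-1) + 1$ from the gluing construction, checking the two base cases, and closing the induction via Pascal's rule in the exponent. The momentary mix-up with $\binom{p+2}{p}$ in the base case for $G(0,p)$ is self-corrected and does not affect the argument.
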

\begin{proof}
Let $n_{k, p} := |V(G(k, p))|$.
The proof goes by induction on pairs $(k, p)$.

By definition, we have $n_{k, 0} = 3 = 2^{\binom{k+1}{0}} + 1$, and $n_{0, p} = 2^{p+1} + 1 = 2^{\binom{p + 1}{p}} + 1$.

By definition, $V(G(k, p)) = V(H(k, p))$, and $H(k, p)$ is obtained by the gluing of $n_{k-1, p} - 1$ copies of $G(k, p-1)$, hence
\[
n_{k, p} = (n_{k-1, p} - 1) (n_{k, p-1}) - (n_{k - 1, p} - 2) = (n_{k-1, p} - 1) (n_{k, p-1} - 1) + 1. 
\]

Therefore, applying the induction hypothesis, we obtain $n_{k, p} =2^{\binom{k + p }{p}} 2^{\binom{k + p}{p - 1}} + 1 = 2^{\binom{k + p + 1}{p}} + 1$.
\end{proof}

\begin{lemma}\label{lem:contained}
If $P := v_1 \dots v_l$ is an induced path of $G(k, p)$, such that $v_1 \in X(k, p)$ and $v_2 \not\in X(k, p)$, then $\{v_2, \dots, v_l\} \subseteq \Int{C_i}$ for some $i$. 
\end{lemma}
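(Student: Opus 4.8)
The plan is to show that once the induced path $P$ leaves $X(k,p)$ through $v_1\in X(k,p)$ into $v_2\notin X(k,p)$, it cannot come back to touch $X(k,p)$ nor cross into a different copy $C_j$, so it stays trapped in the single interval $\Int{C_i}$ containing $v_2$. First I would fix $i$ so that $v_2\in \Int{C_i}$ (this is well defined since $v_2$ is not one of the $c_\ell$'s, and each non-$X$ vertex lies in the interior of exactly one copy $C_\ell$). The two things to rule out are: (a) some $v_t$ with $t\geq 2$ belongs to $X(k,p)$; and (b) some $v_t$ with $t\geq 2$ belongs to $\Int{C_j}$ for $j\neq i$. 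Note (b) actually follows from (a) together with the earlier lemma stating there is no edge between $\Int{C_i}$ and $\Int{C_j}$: if the path ever moved from $\Int{C_i}$ to $\Int{C_j}$, then, since it cannot jump directly between distinct interiors, it must pass through a vertex of $X(k,p)$ in between, which is exactly situation (a). So the whole lemma reduces to proving (a).

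The key step is therefore: \emph{no vertex $v_t$ with $t\geq 2$ lies in $X(k,p)$.} Suppose for contradiction that $t\geq 3$ is the smallest index with $v_t\in X(k,p)$, say $v_t=c_m$. Then $v_2,\dots,v_{t-1}$ all lie in $\Int{C_i}$ (they are not in $X$ by minimality of $t$, and they form a connected subpath starting adjacent to $v_2\in\Int{C_i}$, so by the no-edge-between-distinct-interiors lemma they cannot leave $\Int{C_i}$ without passing through $X$ first). In particular $v_{t-1}\in\Int{C_i}$ and $v_{t-1}v_t$ is an edge, so $c_m$ has a neighbour in $\Int{C_i}$; since edges of $G(k,p)$ touching $\Int{C_i}$ are all inside the copy $C_i$, this forces $c_m$ to be an extremity of $C_i$, i.e.\ $m\in\{i,i+1\}$. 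But $v_1\in X(k,p)$ and $v_1$ is adjacent to $v_2\in\Int{C_i}$, so by the same reasoning $v_1\in\{c_i,c_{i+1}\}$ as well. Thus $v_1$ and $v_t$ are \emph{both} endpoints of the same copy $C_i$, hence $v_1v_t\in E(C_i)\subseteq E(G(k,p))$ (the two extremities of each $C_\ell\cong G(k,p-1)$ are joined by an edge for $p\geq 1$, coming from the outermost edge of that construction — and if $p=0$ the lemma is vacuous since $G(k,0)$ has only $3$ vertices, so we may assume $p\geq 1$). This contradicts that $P$ is induced, since $v_1$ and $v_t$ are non-consecutive on $P$ (as $t\geq 3$).

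The main obstacle is making precise the claim that a connected subpath of $P$ starting in $\Int{C_i}$ stays in $\Int{C_i}$ until it first hits $X(k,p)$; this is where the earlier lemma (no edges between $\Int{C_i}$ and $\Int{C_j}$ for $i\neq j$) does the real work, and one must also invoke that all edges incident to an interior vertex of $C_i$ lie within $C_i$ — which holds because $E(G(k,p))\setminus E(H(k,p))$ consists only of edges among vertices of $X(k,p)$, so an interior vertex of $C_i$ sees only vertices of $C_i$. With those two observations in hand the argument above is essentially bookkeeping. I should also double-check the boundary conventions, namely whether the shared extremities $c_m$ count as belonging to $X(k,p)$ (they do, by definition of $X(k,p)=\{c_1,\dots,c_{d+1}\}$), which is exactly what lets the contradiction go through.
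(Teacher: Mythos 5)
Your argument is correct and essentially matches the paper's, which compresses it to a single observation: the only external neighbours of $\Int{C_i}$ are the two extremities $c_i$ and $c_{i+1}$, and both are dominated by $v_1$, so an induced path entering $\Int{C_i}$ through $v_1$ can never leave it. One small attribution slip in your write-up: you justify $v_1 v_t \in E(G(k,p))$ by claiming $c_i c_{i+1} \in E(C_i)$ because $C_i \cong G(k,p-1)$ has adjacent extremities for $p \geq 1$, but this is false for $p = 1$, where $G(k,0)$ is a path on $3$ vertices with non-adjacent extremities. The edge $c_i c_{i+1}$ is nevertheless present in $G(k,p)$: it comes from the copy of $G(k-1,p)$ placed on $X(k,p)$, which carries an ordered Hamiltonian path through $c_1 \prec \dots \prec c_{d+1}$ (the edge is also in $E(C_i)$, but only once $p \geq 2$). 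So the conclusion of that step is fine; only the stated source of the edge needs correcting.
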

\begin{proof}
By definition, the edge $v_1v_2$ is in $E(H(k, p))$. Hence, there is a copy $C_i$ of $G(k, p-1)$ such that $v_1$ and $v_2$ are two vertices of $C_i$. 
Note that the neighborhood $N(\Int{C_i})$ is exactly the set of extremities of $C_i$. Hence, it is covered by $v_1$ which implies that $\{v_2, \dots, v_l\}$ is contained in $\Int{C_i}$.
\end{proof}

\begin{lemma}\label{lem:large_path}
If $P := v_1 \dots v_l$ is an induced path of $G(k, p)$ and $v_1$ and $v_l$ are in $X(k, p)$, then $P$ is contained in $X(k, p)$.
\end{lemma}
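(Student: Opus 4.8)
The plan is to prove \Cref{lem:large_path} by induction on pairs $(k,p)$, following the construction of $G(k,p)$, and crucially by exploiting \Cref{lem:contained}: once an induced path leaves $X(k,p)$ through some vertex not in $X(k,p)$, the entire remaining suffix of the path stays trapped inside the interior $\Int{C_i}$ of a single copy $C_i$ of $G(k,p-1)$, hence cannot return to $X(k,p)$. Since the path $P$ begins and ends in $X(k,p)$, this observation will immediately force every internal vertex of $P$ to lie in $X(k,p)$ as well — provided we can rule out the path entering $\Int{C_i}$ and then somehow coming back, which \Cref{lem:contained} precisely prevents.

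More concretely, suppose $P = v_1 \dots v_l$ with $v_1, v_l \in X(k,p)$, and suppose for contradiction that some $v_t \notin X(k,p)$ with $1 < t < l$. Take the smallest such $t$; then $v_{t-1} \in X(k,p)$ and $v_t \notin X(k,p)$, so the sub-path $v_{t-1} v_t \dots v_l$ satisfies the hypotheses of \Cref{lem:contained} (its first vertex is in $X(k,p)$, its second is not). Therefore $\{v_t, \dots, v_l\} \subseteq \Int{C_i}$ for some $i$. But $v_l \in X(k,p)$, and $X(k,p) \cap \Int{C_i} = \varnothing$ by definition of $X(k,p)$ as the set of extremities $\{c_1, \dots, c_{d+1}\}$ of the glued copies (each $c_j$ is an extremity of some $C_j$, never an interior vertex). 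This is a contradiction, so no such $t$ exists and $P \subseteq X(k,p)$.

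I do not expect a genuine obstacle here; the work is essentially a clean application of \Cref{lem:contained}. The one point requiring a little care is the base of the argument, namely confirming that $X(k,p) \cap \Int{C_i} = \varnothing$ holds unconditionally — this is immediate from the construction, since the $c_j$ are exactly the shared gluing vertices and interiors exclude extremities by definition of $\Int{\cdot}$. A second minor point is handling the degenerate case $l \le 2$: if $l \le 2$ then $P$ is trivially contained in $X(k,p)$ since both $v_1$ and $v_l$ lie in $X(k,p)$, so we may assume $l \ge 3$ when invoking \Cref{lem:contained}. No induction on $(k,p)$ is actually needed — the statement follows directly — though one could phrase it inductively to mirror the surrounding lemmas if desired.
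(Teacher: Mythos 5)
Your proof is correct and takes essentially the same approach as the paper: both argue by contradiction, identify the first index where $P$ leaves $X(k,p)$, invoke \Cref{lem:contained} on the corresponding suffix of $P$, and derive a contradiction from $v_l \in X(k,p)$. You add the (correct) explicit check that $X(k,p) \cap \Int{C_i} = \varnothing$ and the trivial case $l \le 2$, which the paper leaves implicit.
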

\begin{proof}
Suppose, for the sake of contradiction, that $P$ is not contained in $X(k, p)$. In particular, there exists a first index $i$ with $v_i \in X(k, p)$ and $v_{i+1} \not \in X(k, p)$.

By \Cref{lem:contained}, this implies that $\{v_{i+1}, \dots, v_l\} \cap X(k, p) = \varnothing$; a~contradiction since $v_l \in X(k, p)$.
\end{proof}

\begin{lemma}
$G(k, p)$ does not have an induced path of size $2k(p + 2) + 3$. 
\end{lemma}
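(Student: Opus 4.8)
The plan is to prove the bound $f(k,p) := 2k(p+2)+3$ on the size of induced paths in $G(k,p)$ by induction on the lexicographic order of pairs $(k,p)$, in parallel with an auxiliary bound on induced paths that start at a vertex of $X(k,p)$. The base cases $G(k,0)$ (a $3$-vertex path) and $G(0,p) = U_{p+1}$ are handled by the earlier lemma, which gives induced paths of size at most $2(p+1)+3 = 2p+5 \leqslant f(0,p) = 4p+3$ (for the auxiliary statement one wants the sharper $2p+5$ bound, but any bound subsumed by $f$ suffices to close the induction; I would state the auxiliary invariant carefully to keep constants under control).

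First I would set up the auxiliary claim: any induced path $P$ of $G(k,p)$ with an \emph{extremity} in $X(k,p)$ has size at most roughly $k(p+2)+2$ (half of $f$, plus a constant). The inductive step then splits a general induced path $P = v_1 \dots v_\ell$ by its first and last vertices lying in $X(k,p)$. If $P$ avoids $X(k,p)$ entirely, then $P$ lives inside a single $\Int{C_i}$ (it cannot jump between interiors of distinct copies by the no-edge lemma, and the path is connected), so $P$ is an induced path of a copy of $G(k,p-1)$ and we apply induction to get size at most $f(k,p-1) = 2k(p+1)+3 < f(k,p)$. If $P$ has exactly one vertex, a first or last one, in $X(k,p)$, then \Cref{lem:contained} forces the rest of $P$ into one $\Int{C_i}$; this is the auxiliary case and gives size at most $1 + f(k,p-1)$ (or the sharper half-bound), still well below $f(k,p)$.

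The main case is when $P$ meets $X(k,p)$ at least twice. Let $v_a$ be the first and $v_b$ the last vertex of $P$ in $X(k,p)$. By \Cref{lem:large_path}, the sub-path $v_a \dots v_b$ is entirely contained in $X(k,p)$, hence it is an induced path of the copy of $G(k-1,p)$ drawn on $X(k,p)$, so by induction $b - a + 1 \leqslant f(k-1,p) = 2(k-1)(p+2)+3$. The prefix $v_1 \dots v_a$ is an induced path with an extremity ($v_a$) in $X(k,p)$, and $v_{a-1} \notin X(k,p)$, so by \Cref{lem:contained} applied to the reversed prefix, $\{v_1, \dots, v_{a-1}\}$ sits inside some $\Int{C_i}$; thus the prefix is an induced path of a copy of $G(k,p-1)$ with the single ``boundary'' vertex $v_a$ attached, giving $a \leqslant 1 + (\text{auxiliary bound for } G(k,p-1))$. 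Symmetrically the suffix $v_b \dots v_\ell$ has length at most $1 + (\text{auxiliary bound for } G(k,p-1))$. Adding the three pieces (and subtracting the double-counted $v_a, v_b$) yields $\ell \leqslant f(k-1,p) + 2\cdot(1 + \text{aux}(k,p-1)) - 2$; the whole point of choosing $f$ and the auxiliary bound to be roughly $2k(p+2)$ and $k(p+2)$ respectively is that this sum telescopes back to $f(k,p)$. I would then double-check that the same argument, applied to a path with \emph{one} extremity in $X(k,p)$, reproduces the auxiliary bound, closing the simultaneous induction.

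The main obstacle is bookkeeping the additive constants so that the two mutually recursive inequalities actually close: the general bound feeds on $f(k-1,p)$ and two copies of the auxiliary bound at $(k,p-1)$, while the auxiliary bound feeds on $f(k-1,p)$ (or its auxiliary counterpart) and one copy of the auxiliary bound at $(k,p-1)$, plus the ``$+1$'' boundary vertices. One has to pick the auxiliary constant (I expect something like $\mathrm{aux}(k,p) = k(p+2)+1$ or $+2$) so both recurrences are satisfied with the stated $f$, and separately verify the base cases $p=0$ and $k=0$ against both invariants — that is where a wrong constant would first bite. Everything else is a direct consequence of \Cref{lem:contained}, \Cref{lem:large_path}, and the no-edge-between-interiors lemma already proved.
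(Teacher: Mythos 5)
Your overall decomposition — split the path at its first and last visits to $X(k,p)$, bound the middle via \Cref{lem:large_path} inside a copy of $G(k-1,p)$, and bound each end inside a single $\Int{C_i}$ via \Cref{lem:contained} — is exactly the paper's decomposition, and that part is correct. The gap is in your choice of auxiliary invariant, and it is not a matter of constant bookkeeping: the invariant you propose is false, and even a corrected version with $k$-dependence would not close the recursion.

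You propose ``any induced path with an extremity in $X(k,p)$ has size at most roughly $k(p+2)+2$''. This is already wrong: a path lying entirely inside $X(k,p)$ has both extremities in $X(k,p)$, yet $X(k,p)$ induces a copy of $G(k-1,p)$, which (by the very bound you are trying to prove) can have induced paths of size up to $2(k-1)(p+2)+3$, exceeding $k(p+2)+2$ as soon as $k \geqslant 2$. Moreover, even if you kept some $k$-dependent auxiliary bound $\mathrm{aux}(k,p) \approx k(p+2)$, plugging $\ell \leqslant f(k-1,p) + 2\,\mathrm{aux}(k,p-1) - 2$ gives roughly $4kp + 6k$ on the right-hand side, overshooting $f(k,p) = 2kp + 4k + 3$ by a term of order $2(k-1)(p+1)$; the recursion does not telescope.

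The invariant the paper actually uses is much stronger: an induced path that starts (or ends) at a \emph{literal extremity} of $G(k,p)$ — the first or last vertex $c_1$ or $c_{d+1}$, not merely some vertex of $X(k,p)$ — has size at most $p+3$, with \emph{no dependence on $k$}. The reason this is so small is that $c_1$ and $c_{d+1}$ are complete to $X(k,p)$, so once the path leaves $c_1$ it can never touch $X(k,p)$ again without creating a chord; it is therefore trapped inside one copy of $G(k,p-1)$ starting at that copy's own extremity, and the recursion only decreases $p$, never returning to the $k$-recursion. Applied to $G(k,p-1)$, this gives $p+2$ for each of the two end-segments in your main case, and $f(k-1,p) + 2(p+2) - 2 \leqslant f(k,p)$ closes cleanly. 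You should replace your auxiliary invariant with this one; without it, the argument does not go through.
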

\begin{proof}
Let $f(k, p) := 2k(p + 2) + 3$.
We prove by induction on $(k, p)$ the following statement.
\begin{itemize}
\item Any induced path of $G(k, p)$ starting or ending at an extremity of $G(k, p)$ is of size $\leqslant p + 3$.
\item Any induced path of $G(k, p)$ is of size $\leqslant f(k, p)$.
\end{itemize}
Let $P = v_1 \dots v_\ell$ be an induced path of $G(k, p)$.
Consider $i$ the first index such that $v_i \in X(k, p)$.
By \Cref{lem:contained} on the path $Q := v_i \dots v_1$, there exists a copy of $G(k, p-1)$ containing $Q$. Hence, by induction $|V(Q)| \leqslant p + 2$.
Symmetrically, let $j$ be the last index with $v_j \in X(k, p)$. Again the path $v_j \dots v_\ell$ is of size at most $p+2$.
It remains to obtain the size of $M := v_i \dots v_j$, which by \Cref{lem:large_path} is contained in $X(k, p)$, which induces a copy of $G(k-1, p)$.
Thus, we have $|M| \leqslant f(k-1, p)$.
From that one can derive that 
\[
|V(P)| \leqslant f(k-1, p) + 2 (p + 2) \leqslant 2(k-1)(p+2) + 3 + 2(p+2) = 2 k (p+2) + 3.
\]
In the particular case where $v_1$ is the first vertex of $G(k, p)$, none of the vertices $v_3, \dots, v_\ell$ are in $X(k, p)$ since this vertex set is dominated by $v_1$. By \Cref{lem:contained}, $v_2, \dots, v_\ell$ are all contained in a copy of $G(k, p-1)$; hence, the size of $|P|$ is at most $p + 2 + 1 = p + 3$.
\end{proof}

\begin{corollary}
For any integer $k$, there exist $n$-vertex Hamiltonian graphs ordered along such a path that are \notcross{(2k + 1)} and are without induced paths of size $\Omega_k((\log n)^{1/(k+1)})$.
\end{corollary}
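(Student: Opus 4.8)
The plan is to fix the integer $k$ and let $G := G(k,p)$ range over the family constructed above as $p \to \infty$. The preceding lemmas already supply everything we need about $G$: it is \notcross{(2k+1)}; it has exactly $n := 2^{\binom{k+p+1}{p}} + 1$ vertices; and it has no induced path on $2k(p+2)+3$ vertices. The only property not recorded explicitly above is that $G$ is ordered along a Hamiltonian path, but this follows by exactly the induction used for $U_p$: a gluing of ordered graphs each equipped with an increasing Hamiltonian path again has one (concatenate the two paths at the glued vertex), adding edges never destroys a Hamiltonian path, and the base cases $G(k,0)$ (the path on three vertices) and $G(0,p) = U_{p+1}$ are immediate. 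So $G$ is an $n$-vertex \notcross{(2k+1)} graph ordered along a Hamiltonian path whose longest induced path has at most $2k(p+2)+2$ vertices.

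It then remains to express this bound in terms of $n$. Since $k$ is fixed, $\binom{k+p+1}{p} = \binom{k+p+1}{k+1}$ is a polynomial in $p$ of degree $k+1$ with leading coefficient $\tfrac{1}{(k+1)!}$, so $\log_2(n-1) = \Theta_k(p^{k+1})$, and inverting this gives $p = \Theta_k\bigl((\log n)^{1/(k+1)}\bigr)$. Substituting into the induced-path bound yields that the longest induced path of $G$ has $O_k(p) = O_k\bigl((\log n)^{1/(k+1)}\bigr)$ vertices; equivalently, there is a constant $c_k > 0$ such that $G(k,p)$ has no induced path on $c_k(\log n)^{1/(k+1)}$ vertices. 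Letting $p \to \infty$ gives the corollary for the infinitely many values $n = 2^{\binom{k+p+1}{p}}+1$, $p \geq 0$.

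I do not expect a genuine obstacle here: each ingredient is one of the lemmas of this section, and the only computation is the elementary estimate $\binom{k+p+1}{k+1} = \Theta_k(p^{k+1})$ together with its inversion. The one point worth a remark is that the conclusion is stated along the sequence $n = n_{k,p}$, which is the usual meaning of such an upper-bound construction; it is worth noting that the naive attempt to fill in the remaining values of $n$, say by gluing a long path or several copies of $G(k,p)$, would create induced paths far longer than $(\log n)^{1/(k+1)}$, so the statement is best read as ``for infinitely many $n$''. Finally, since $\lceil (2k+1)/2 \rceil = k+1$, this upper bound $O_k\bigl((\log n)^{1/(k+1)}\bigr)$ sits within a $(\log\log n)^{1/(k+1)}$ factor of the lower bound $\Omega_{k+1}\bigl((\log n/\log\log n)^{1/(k+1)}\bigr)$ from \Cref{thm:k-outer-thm}, which is the precise sense in which the two bounds almost match.
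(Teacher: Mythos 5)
Your proof is correct and takes exactly the route the paper intends: the corollary is a direct consequence of the lemmas of Section~5 (size, \notcross{(2k+1)}-ness, and induced-path bound of $G(k,p)$), combined with the elementary estimate $\binom{k+p+1}{k+1} = \Theta_k(p^{k+1})$ and its inversion to get $p = \Theta_k\bigl((\log n)^{1/(k+1)}\bigr)$. Your note that $G(k,p)$ carries an increasing Hamiltonian path (inherited through gluing from the base cases) fills in a small detail the paper records only for $U_p$, and your remark that the construction realizes only the values $n = n_{k,p}$ is a correct reading of the statement.
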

\bibliography{refs}
\bibliographystyle{alpha}

\end{document}